\documentclass[tbtags,reqno]{amsart}

\usepackage{geometry}
\usepackage{tikz}

\usepackage{xcolor}

\definecolor{supred}{RGB}{180,0,0}
\definecolor{supblue}{RGB}{0,60,180}

\usepackage{mathrsfs}
\usepackage{latexsym}
\usepackage{amsthm,amsopn,tabmac,amsfonts,amssymb,epsfig,color,pstricks,pb-diagram,mathdots,stmaryrd}
\usepackage[active]{srcltx}
\numberwithin{equation}{section}

\makeatletter
\renewcommand{\subsubsection}{\@startsection
{subsubsection}
{3}
{0mm}
{\baselineskip}
{-0.5\baselineskip}
{\normalfont\normalsize\bfseries}}
\makeatother

\newtheorem{theorem}{Theorem}
\newtheorem{lemma}[theorem]{Lemma}
\newtheorem{proposition}[theorem]{Proposition}
\newtheorem{example}[theorem]{Example}
\newtheorem{conjecture}[theorem]{Conjecture}

\newtheorem{definition}[theorem]{Definition}

\newtheorem{remark}[theorem]{Remark}

\def\cal L{{\mathcal L}}

\def\setSD  {{\mathcal S}}

\newcommand{\cercle}[1]{\ensuremath{\setlength{\unitlength}{1ex}\begin{picture}(2.8,1.5)\put(1.4,0.7){\circle{2.8}\makebox(-5.6,0){#1}}\end{picture}}}
\newcommand{\tcercle}[1]{\ensuremath{\setlength{\unitlength}{1ex}\begin{picture}(2.8,2.8)\put(1.4,1.4){\circle{2.8}\makebox(-5.6,0){#1}}\end{picture}}}

\def\beq{\begin{equation}}
\def\eeq{\end{equation}}
\def\bea{\begin{align}}
\def\eea{\end{align}}

\title{A proof of the  $m$-Symmetric  Macdonald positivity
  at $t=1$}
\begin{document}

\author{Luc Lapointe}
\address{Instituto de Matem\'aticas, Universidad de
Talca, 2 norte 685, Talca, Chile.}
\email{llapointe@utalca.cl }

\author{Luis Pena}
\address{Instituto de Matem\'aticas, Universidad de
Talca, 2 norte 685, Talca, Chile.}
\email{luis.cardenas@utalca.cl }

\begin{abstract}
  We prove, in the case $t=1$, the extension to the $m$-symmetric world of the original Macdonald positivity conjecture.
  This is achieved by giving a combinatorial interpretation of the Kostka coefficients  $K_{\Omega \Lambda}(q,1)$  in terms of standard fillings of the diagram associated to the $m$-partition $\Omega$. This interpretation generalizes the one in the usual Macdonald case, which is given by a major index statistic on standard tableaux.
\end{abstract}
  
\keywords{Macdonald polynomials, Schur functions, symmetric functions}

\thanks{Funding: this work was supported by
the Fondo Nacional de Desarrollo Cient\'{\i}fico y Tecnol\'ogico de Chile (FONDECYT) Regular Grant \#1250376 and by the Beca de Doctorado Nacional ANID \#21211108.}

\maketitle

\section{Introduction}	
For a nonnegative integer $m$,  
the ring $R_m$ of $m$-symmetric functions is the subring of $\mathbb Q(q,t)[[x_1,x_2,x_3,\dots]]$ consisting of formal power series that are symmetric in the variables $x_{m+1},x_{m+2},x_{m+3},\dots$ (the first $m$ variables thus playing a distinguished, non-symmetric role).  An extension of the Macdonald polynomials $P_\Lambda(x;q,t)$ to this setting,
indexed by $m$-partitions and forming a basis of $R_m$, was introduced in  \cite{L, CL}. The same family has also been considered in \cite{BG,BG2} under the name of {\it partially-symmetric Macdonald polynomials}. Remarkably, it has been shown in \cite{OW} that the partially-symmetric/$m$-symmetric Macdonald polynomials are in correspondence with certain $(\mathbb C^*)^2$-fixed point classes of the parabolic flag Hilbert schemes \cite{GM}, a result that generalizes the correspondence between Macdonald polynomials and  $(\mathbb C^*)^2$-fixed points of the Hilbert schemes \cite{Haiman}.

In \cite{L, CL, BG,BG2,OW}, the partially-symmetric/$m$-symmetric Macdonald polynomials are defined as a $t$-symmetrization of the non-symmetric Macdonald polynomials\footnote{The version of the non-symmetric Macdonald polynomials used in  \cite{BG,BG2,OW}, although equivalent, differs slightly from that used in   \cite{L, CL}.  As a consequence, the $t$-symmetrization in \cite{BG,BG2,OW} is performed on all but the {last} $m$ variables, whereas in \cite{L,CL} it is performed on all but the {first} $m$ variables.}. In this article, we instead rely on the results of \cite{CL} to define the $m$-symmetric Macdonald polynomials through an
orthogonality/unitriangularity  characterization akin to that of the usual Macdonald polynomials. Specifically, there is a natural scalar product on $R_m$ with respect to which the  power-sum symmetric functions in the $m$-symmetric world are orthogonal: 
\begin{equation*} 
\langle p_\Lambda(x;t)\, ,\, p_\Omega(x;t)  \rangle_m =\delta_{\Lambda \Omega} \, q^{|\pmb a|}t^{{\rm Inv} (\pmb a)} z_\lambda(q,t).
\end{equation*}
The $m$-symmetric Macdonald polynomials are then the 
  unique basis $\{ P_\Lambda(x;q,t)\}_\Lambda$ of $R_m$ such that:
  \begin{enumerate}
\item $\displaystyle{\langle P_\Lambda(x;q,t)\, ,\, P_\Omega(x;q,t)  \rangle_m = 0 {\rm ~if~} \Lambda\neq\Omega}  $ \quad {\rm (orthogonality)}    
\item $\displaystyle{P_\Lambda(x;q,t)= m_\Lambda + \sum_{\Omega < \Lambda} d_{\Lambda \Omega}(q,t) \, m_\Omega}$ \quad {\rm (unitriangularity)},
  \end{enumerate}
where the $m_\Lambda$'s are the $m$-symmetric monomials and where the order on $m$-partitions is an appropriate generalization of the usual dominance ordering.

A central feature of the $m$-symmetric framework, introduced in \cite{L}, is an extension of the original Macdonald positivity conjecture (now a theorem \cite{GH,Haiman}).  The conjecture asserts that
the plethystically modified integral form  $J_\Lambda(x;q,t)=c_\Lambda(q,t) P_\Lambda(x;q,t)$
of the $m$-symmetric Macdonald polynomials expands positively in terms of the
$m$-symmetric Schur functions (which now depend on a parameter $t$):
\begin{equation} \label{Macdoposintro}
  J_\Lambda\left[\frac{X}{1-t};q,t \right] = \sum_\Omega K_{\Omega \Lambda}(q,t) \, s_\Omega[X;t]
 \qquad {\rm with~} K_{\Omega \Lambda}(q,t) \in \mathbb N[q,t].
\end{equation}
The motivation for introducing this conjecture was to
embed the $(q,t)$-Kostka coefficients into a much richer combinatorial framework, where the additional structure could shed light on their notoriously elusive nature.  In particular, this extra structure appears to include special recursions and Butler-type rules for the coefficients $K_{\Omega \Lambda}(q,t)$, which could
hold the key to a long-sought combinatorial interpretation of the $(q,t)$-Kostka coefficients in terms of a $q,t$-statistic on standard tableaux.

The main goal of this article is to prove the $m$-symmetric Macdonald positivity conjecture at $t=1$.
In the usual Macdonald case \cite{M}, one typically begins with the $q=1$ case (in which the Macdonald polynomial reduces to an elementary symmetric function) and extracts the $t=1$ case from the duality $K_{\mu \lambda}(q,t)=K_{\mu' \lambda'}(t,q)$.  Alternatively, the $t=1$ case can be obtained directly, using the factorization
$$
\lim_{t \to 1} J_\lambda\left[\frac{X}{1-t};q,t\right]
=(q;q)_\lambda h_\lambda\left[\frac{X}{1-q}\right], 
 $$
where $(q;q)_\lambda$ is a product of $q$-shifted factorials $(q;q)_\ell$, and where $h_\lambda$ is a homogeneous symmetric function.  Combined with the identity
$$
(1-q^\ell) h_\ell\left[ \frac{X}{1-q} \right] = \sum_{k=0}^{\ell-1} q^k h_{\ell-k}[X] h_{k}\left[\frac{X}{1-q}\right],
$$
this leads, by induction and the Pieri rules, to
\begin{equation} \label{usualcase}
K_{\mu \lambda}(q,1) = \sum_P q^{{\rm maj}_\lambda(P)} ,
\end{equation}
where the sum is over all standard tableaux $P$ of shape $\mu$, and where ${\rm maj}_\lambda$ is a major index statistic depending on $\lambda$.

Since the duality $K_{\mu \lambda}(q,t)=K_{\mu' \lambda'}(t,q)$ has no analog in the $m$-symmetric world, our strategy is to generalize the direct approach at $t=1$.  We will show, in Proposition~\ref{propJ1}, that the polynomial $J_\Lambda\left[{X}/{(1-t)};q,t \right]$ also factorizes at $t=1$:
\begin{equation}  \label{Jinhintro}
\lim_{t \to 1} J_\Lambda\left[\frac{X}{1-t};q,t\right]
=(q;q)_\Lambda
 h_{a_1}\left[x_1+\frac{qX}{1-q} \right]\cdots  h_{a_m}\left[x_m+\frac{qX}{1-q} \right]  h_\lambda\left[\frac{X}{1-q}\right] . 
\end{equation}
 The proof of this non-trivial factorization relies on properties of the scalar product $\langle \cdot, \cdot \rangle_{q,t}$ together with the explicit formula for the squared norm $\langle P_\Lambda(x;q,t), P_\Lambda(x;q,t) \rangle_{q,t}$ obtained in \cite{CL}.

 Combining \eqref{Jinhintro} with the identity
 $$
h_{\ell} \left[x_i+\frac{qX}{1-q} \right] = \sum_{k=0}^{\ell} q^k x_i^{\ell-k} h_{k} \left[\frac{X}{1-q} \right]  
 $$
yields our main result (Theorem~\ref{theo}), namely the sought-after combinatorial interpretation:
\begin{equation} \label{eqKosIntro}
K_{\Omega \Lambda}(q,1) = \sum_T q^{{\rm maj}_{\tilde \Lambda}(T_{\pmb a})},
\end{equation}
where the sum is over all standard fillings $T$ of the shape $\Omega$, where ${\rm maj}_{\tilde \Lambda}$ is a major index statistic  depending on $\Lambda$, and where $T_{\pmb a}$ is a standard tableau constructed from $T$ and the non-symmetric entries $\pmb a$ in $\Lambda=(\pmb a;\lambda)$.  Owing to the intricacies of the $m$-symmetric Schur functions, 
this formula applies directly only when $\Omega$ is dominant (that is, when the entries $\pmb b=(b_1,\dots,b_m)$ in $\Omega=(\pmb b;\mu)$ satisfy $b_1 \geq b_2 \geq \cdots \geq b_m$).  This restriction is easily lifted, however, using the symmetry (Proposition~\ref{propsym})
\begin{equation} \label{eqsymintro}
 K_{\Omega \Lambda} (q,1) =K_{\sigma(\Omega) \sigma(\Lambda)} (q,1),
\end{equation}
which holds for any permutation $\sigma \in S_m$, where $\sigma(\Lambda)=(a_{\sigma^{-1}(1)},\dots, a_{\sigma^{-1}(m)};\lambda)$ if $\Lambda=(\pmb a;\lambda)$.
A noteworthy corollary of \eqref{eqKosIntro} is that
$$
K_{\Omega \Lambda}(1,1) = \# \{ {\rm standard~fillings~of~the~shape~} \Omega \}=  \# \{ {\rm standard~tableaux~of~shape~} \pmb b \cup \mu \}. 
$$
This makes it clear that any combinatorial interpretation of the $K_{\Omega \Lambda}(q,t)$ coefficients --- which contain the usual $(q,t)$-Kostka coefficients $K_{\mu \lambda}(q,t)$
as special cases --- must amount to producing a $q,t$-statistic on standard tableaux.

The article is organized as follows.
Section~\ref{sect2} presents the extension to the $m$-symmetric world of the basic concepts in symmetric function theory.  Section~\ref{secmsymMacod} introduces the orthogonality/unitriangularity characterization of the $m$-symmetric Macdonald polynomials, which relies on the Hecke algebra in the definition of the $t$-deformation of the $m$-symmetric power-sums.  The (dual) $m$-symmetric Schur functions are defined in Section~\ref{secdual}, which sets the stage for the statement of the $m$-symmetric Macdonald positivity conjecture in Section~\ref{secMain}.  Section~\ref{sectab} establishes a tableau formula for the expansion of the dual $m$-symmetric Schur functions; by duality, this yields a combinatorial description of the product of a dominant monomial and a Schur function in terms of $m$-symmetric Schur functions.  The factorization \eqref{Jinhintro} and the symmetry \eqref{eqsymintro}
are proved in Section~\ref{sect1Mac}.  Finally, Section~\ref{sect8} develops the basic properties of the Jeu de Taquin that we will need and establishes the combinatorial interpretation \eqref{eqKosIntro} for the coefficients $K_{\Omega \Lambda}(q,1)$.

\section{The ring of $m$-symmetric functions} \label{sect2}

Most of this section is taken from \cite{L}.
Let $\mathbf \Lambda=\mathbb Q(q,t)[h_1,h_2,h_3,\dots]$ be the ring of symmetric functions in the variables $x_1,x_2,x_3,\dots$ (the standard references on symmetric functions are \cite{M,Stan}), where
$$
h_r=h_r(x_1,x_2,x_3,\dots) = \sum_{i_1 \leq i_2 \leq \cdots \leq i_r} x_{i_1} x_{i_2} \cdots x_{i_r} .
$$
 Bases of $\mathbf \Lambda$ are 
indexed by partitions $\lambda=(\lambda_1\geq\dots\geq\lambda_k>0)$ 
whose degree $\lambda$ is $|\lambda|=\lambda_1 +\cdots +\lambda_k$
and whose length $\ell(\lambda)=k$.  Each partition $\lambda$ has an 
associated Young diagram with $\lambda_i$ lattice squares in the $i^{th}$ 
row, from top to  bottom (English notation).  Any lattice square $(i,j)$ 
in the $i$th row and $j$th column of a Young diagram is called a cell.
 The partition
$\lambda\cup\mu$ is the non-decreasing rearrangement of the parts 
 of $\lambda$ and $\mu$.
  The dominance order $\geq$ is defined on partitions by
$\lambda\geq \mu$ when $\lambda_1+\cdots+\lambda_i\geq
\mu_1+\cdots+\mu_i$ for all $i$, and $|\lambda|=|\mu|$.

We define
the ring $R_m$ of $m$-symmetric functions as the subring of $\mathbb Q(q,t)[[x_1,x_2,x_3,\dots]]$ 
made  of formal power series that are symmetric  in the variables $x_{m+1},x_{m+2},x_{m+3},\dots$.
In other words, we have
$$R_m \simeq \mathbb Q(q,t)[x_1,\dots,x_m] \otimes \mathbf \Lambda_m,$$
where
$\mathbf \Lambda_m$ is the ring of symmetric functions in the variables $x_{m+1},x_{m+2},x_{m+3},\dots$.
It is  immediate that $R_0=\mathbf \Lambda$ is the usual ring of symmetric functions and that $R_0 \subseteq R_1 \subseteq R_2 \subseteq \cdots $.  Bases of $R_m$ are naturally indexed by $m$-partitions which are pairs $\Lambda=(\pmb a;\lambda)$, where 
$\pmb a= (a_1,\dots,a_m) \in \mathbb Z_{\geq 0}^m$ is a composition with $m$ parts, and where
$\lambda$ is a partition.  We will call the entries of $\pmb a$ and $\lambda$ the
non-symmetric and symmetric entries of $\Lambda$ respectively.
In the following, unless stated otherwise, $\Lambda$ and $\Omega$ will always stand respectively for the $m$-partitions $\Lambda=(\pmb a;\lambda)$ and $\Omega=(\pmb b;\mu)$. Observe that we use a different notation for  the composition $\pmb a$ with $m$ parts (which corresponds to the non-symmetric entries 
of $\Lambda$)
than for the composition $\eta$ with $N$ parts (which will typically index a non-symmetric Macdonald polynomial).

Given a composition $\pmb a$ and a partition $\lambda$, $\pmb a \cup \lambda$ will denote the partition obtained by reordering the entries of the concatenation of $\pmb a$ and $\lambda$.
The degree of an $m$-partition $\Lambda$, denoted $|\Lambda|$, is the sum of the degrees of $\pmb a$ and $\lambda$, that is, 
$|\Lambda|=a_1+\dots +a_m+\lambda_1+\lambda_2+\cdots$. We also define the 
length of $\Lambda$ as $\ell(\Lambda)=m+\ell(\lambda)$. We will say that $\pmb a$ is dominant if $a_1 \geq a_2 \geq \cdots \geq a_m$, and by extension, we will say that $\Lambda=(\pmb a; \lambda)$ is dominant if $\pmb a$ is dominant.  If $\pmb a$ is not dominant, we let $\pmb a^+$ be the dominant composition obtained by reordering the entries of $\pmb a$.

There is a natural way to represent an $m$-partition by a Young diagram. 
The diagram corresponding to $\Lambda$ is the Young diagram of $\pmb a \cup \lambda$ with an $i$-circle  added to the right of the row of size $a_i$ for $i=1,\dots,m$ (if there are many rows of size $a_i$, the circles are ordered from top to bottom in increasing order).  For instance, given $\Lambda=(2,0,2,1; 3,2 )$, we have
 $$
\Lambda \quad \longleftrightarrow  \quad {\tableau[scY]{&& & \bl \\& & \bl \cercle{1} \\& & \bl \cercle{3}\\ & & \bl  \\ & \bl \cercle{4} \\ \bl \cercle{2} }}
$$
Observe that when $m=0$, the diagram associated to  $\Lambda=( ; \lambda)$ coincides with the Young diagram associated to $\lambda$.
Also note that if $\eta$ is a composition with $m$ parts, then the diagram of $\eta$ coincides with the diagram of the $m$-partition $\Lambda=(\pmb a;\emptyset)$, where $\pmb a=\eta$.  We let $\Lambda^{(0)}=\pmb a \cup \lambda$, that is, $\Lambda^{(0)}$ is the partition 
obtained from the diagram of $\Lambda$ by discarding all the circles.
More generally, for $i=1,\dots,m$, we let $\Lambda^{(i)}=(\pmb a+1^i) \cup \lambda$, where $\pmb a +1^i=(a_1+1,\dots,a_i+1,a_{i+1},\dots,a_m)$. In other words, 
$\Lambda^{(i)}$ is the partition obtained from the diagram associated to $\Lambda$ by changing all of the $j$-circles, for $1 \leq j \leq i$, into squares and discarding the remaining circles.  Taking as above
$\Lambda=(2,0,2,1; 3,2)$, we have
$\Lambda^{(0)}=(3,2,2,2,1)$, $\Lambda^{(1)}=(3,3,2,2,1)$, $\Lambda^{(2)}=(3,3,2,2,1,1)$, $\Lambda^{(3)}=(3,3,3,2,1,1)$ and $\Lambda^{(4)}=(3,3,3,2,2,1)$. 
We then define the
dominance ordering on $m$-partitions to be  such that
\begin{equation} \label{deforder}
\Lambda \geq \Omega \iff \Lambda^{(i)} \geq \Omega^{(i)} \qquad {\rm for~all~}i=0,\dots,m,
\end{equation}
where the order on the right-hand-side
is the usual dominance order on partitions.

\begin{example}
We have that $\Lambda=(0,3;1) \geq (2,1;1) =\Omega$,
since $\Lambda^{(2)}=(4,1,1) \geq (3,2,1) =\Omega^{(2)}$, $\Lambda^{(1)}=(3,1,1) \geq (3,1,1) =\Omega^{(1)}$, and $\Lambda^{(0)}=(3,1) \geq (2,1,1) =\Omega^{(0)}$.
\end{example}

We will associate arm and leg-lengths to the cells of the diagram of an $m$-partition.  Because of the circles, we will need two notions of arm-lengths as well as two notions of leg-lengths.  The arm-length  $a(s)$ 
is equal to the number of cells in $\Lambda$ strictly to the right of $s$ (and in the same row).  Note that if there is a circle at the end of its row, then it adds one to the arm-length of $s$.
 The arm-length $\tilde a(s)$ is exactly as $a(s)$ except that the circle at the end of the row does not contribute to $\tilde a(s)$.

The leg-length $\ell(s)$ is equal to the number of cells in $\Lambda$ strictly below $s$ (and in the same column). If at the bottom of its column there are $k$ circles whose fillings are smaller than the filling of the circle at the end of its row, then they add $k$ to the value of the leg-length of $s$.  If the row does not end with a circle then none of the circles at the bottom of its column  contributes to the leg-length.
 The leg-length $\tilde \ell(s)$ is exactly as $\ell(s)$ except that the circles at the bottom of the column contribute to $\tilde \ell(s)$ when there is no circle at the end of the row of $s$.

\begin{example} \label{exal}
The values of $a(s)$ and $\ell(s)$ in each cell of the diagram of  $\Lambda={(2,0,0,2;4,1,1)}$ are
$$
 {\tableau[scY]{\mbox{\scriptsize{\rm 34}}&\mbox{\scriptsize{\rm 22}}&  \mbox{\scriptsize{\rm 10}} & \mbox{\scriptsize{\rm 00}} & \bl \\\mbox{\scriptsize{\rm 23}}& \mbox{\scriptsize{\rm 11}}& \bl \cercle{\rm 1} \\\mbox{\scriptsize{\rm 24}}& \mbox{\scriptsize{\rm 10}} & \bl \cercle{\rm 4}\\ \mbox{\scriptsize{\rm 01}}& \bl  \\ \mbox{\scriptsize{\rm 00}} \\ \bl \cercle{\rm 2} \\ \bl \cercle{\rm 3}}}
 $$
while those of 
 $\tilde a(s)$ and $\tilde \ell(s)$ are
 $$
 {\tableau[scY]{\mbox{\scriptsize{\rm 36}}&\mbox{\scriptsize{\rm 22}}&  \mbox{\scriptsize{\rm 12}} & \mbox{\scriptsize{\rm 00}} & \bl \\\mbox{\scriptsize{\rm 13}}& \mbox{\scriptsize{\rm 01}}& \bl \cercle{\rm 1} \\\mbox{\scriptsize{\rm 14}}& \mbox{\scriptsize{\rm 00}} & \bl \cercle{\rm 4}\\ \mbox{\scriptsize{\rm 03}}& \bl  \\ \mbox{\scriptsize{\rm 02}} \\ \bl \cercle{\rm 2} \\ \bl \cercle{\rm 3}}}
 $$
\end{example}

Let the $m$-symmetric monomial function $m_\Lambda(x)$ be defined as
$$
m_\Lambda(x) := x_1^{a_1} \cdots x_m^{a_m} \, m_\lambda (x_{m+1},x_{m+2},\dots)= x^{\pmb a}  \, m_\lambda (x_{m+1},x_{m+2},\dots),
$$
where $m_\lambda (x_{m+1},x_{m+2},\dots)$ is the usual monomial symmetric function
 in the variables $x_{m+1},x_{m+2},\dots$
$$
m_\lambda (x_{m+1},x_{m+2},\dots)= \sum_\alpha x_{m+1}^{\alpha_1} x_{m+2}^{\alpha_2} \cdots,   
$$
with the sum being over all distinct rearrangements $\alpha$ of $(\lambda_1,\lambda_2,\dots,\lambda_{\ell(\lambda)},0,0,\dots)$.
It is immediate that $\{ m_\Lambda(x) \}_\Lambda$ is a basis of $R_m$.

The $m$-symmetric power-sums are defined as
\begin{equation} \label{eqp}
p_\Lambda(x) := x_1^{a_1} \dots x_m^{a_m} \, p_\lambda (x)= x^{\pmb a} \, p_\lambda (x).
\end{equation}
 It should be observed that
 the variables in $p_\lambda$, contrary to those of $m_\lambda$ in $m_\Lambda(x)$, start at $x_1$ instead of
$x_{m+1}$. In this expression,  $p_\lambda (x)$ is the usual power-sum symmetric function
$$
p_\lambda(x) = \prod_{i=1}^{\ell(\lambda)} \, p_{\lambda_i}(x),
$$
where $p_r(x)=x_1^r+x_2^r+\cdots$.

 Let  $s_\lambda(x)$ be the Schur function indexed by the partition $\lambda$, which can be defined through the Jacobi-Trudi determinant:
\begin{equation} \label{eqSchur}
s_{\lambda}(x)=\det \Bigl( h_{\lambda_i-i+j}(x) \Bigr)_{1 \leq i,j \leq \ell(\lambda)},
\end{equation}
where $h_k(x)=0$ if $k<0$. By replacing $p_\lambda(x)$ by $s_\lambda(x)$ in \eqref{eqp}, we get another basis of $R_m$:
\begin{equation} \label{eqk}
k_\Lambda(x) := x^{\pmb a} s_\lambda(x).
\end{equation}
The basis $k_\Lambda(x)$ will play a fundamental role in this article.

\section{$m$-symmetric Macdonald polynomials} \label{secmsymMacod}
In order to define the $m$-symmetric Macdonald polynomials, we first need to define a $t$-modification of the $m$-symmetric  power-sum basis.  It will rely on the Hecke algebra.

Let the exchange operator $K_{i,j}$ be such that
$$K_{i,j} f(\dots, x_i,\dots,x_j,\dots)= f(\dots, x_j,\dots,x_i,\dots).$$
We then define the generators $T_i$, for $i=1,\dots,m-1$,
of the Hecke algebra as
\begin{equation} \label{eqTi}
  T_i=t+\frac{tx_i-x_{i+1}}{x_i-x_{i+1}}(K_{i,i+1}-1).
  \end{equation}
The $T_i$'s satisfy the relations:
\begin{align*} &(T_i-t)(T_i+1)=0\nonumber\\
&T_iT_{i+1}T_i=T_{i+1}T_iT_{i+1}\nonumber\\
&T_iT_j=T_jT_i \, ,\quad {\rm if~}|i-j| > 1.
\end{align*}

Let $H_{\pmb a}(x;t) = H_{\pmb a}(x_1,\dots,x_m;t)$ be the  non-symmetric Hall-Littlewood polynomial. The polynomial  $H_{\pmb a}(x;t)$ can be constructed recursively as follows.  If 
$\pmb a$ is dominant then $H_{\pmb a}(x;t)=x^{\pmb a}$.  Otherwise,
$T_i H_{\pmb a}(x;t)=H_{s_i \pmb a}(x;t)$ if $a_i > a_{i+1}$ (with $s_i \pmb a=(a_1\dots,a_{i+1},a_i,\dots,a_m)$).  Since $H_{\pmb a}(x;1)=x^{\pmb a}$,
the following $t$-deformation of the $m$-symmetric power sum basis
\begin{equation} \label{eqplambda}
p_\Lambda(x;t)= H_{\pmb a}(x;t) p_\lambda(x)
\end{equation}
also provides a basis of $R_m$.

Let $|\pmb a|=a_1+\cdots +a_m$, and let
${\rm Inv}(\pmb a)$ be the number of inversions in $\pmb a$, that is,
$$
{\rm Inv}(\pmb a)=\#\{(i,j) \, | \, 1\leq i<j\leq m {\rm ~and~} a_{i} < a_j  \}.
$$
We now introduce a  scalar product in $R_m$ defined on the $t$-deformation of the $m$-symmetric power-sums as:
\begin{equation} \label{eqscal}
\langle p_\Lambda(x;t)\, ,\, p_\Omega(x;t)  \rangle_{q,t} =\delta_{\Lambda \Omega} \, q^{|\pmb a|}t^{{\rm Inv} (\pmb a)} z_\lambda(q,t),
\end{equation}
where
$$
z_\lambda(q,t) = z_\lambda \prod_{i=1}^{\ell(\lambda)} \frac{1-q^{\lambda_i}}{1-t^{\lambda_i}}.
$$
In the previous expression,   $z_\lambda= \prod_{i \geq 1 } i^{n_\lambda(i)} \cdot n_\lambda(i)!$, with $n_\lambda(i)$ the number of occurrences of $i$ in $\lambda$. Observe that
when $m=0$, this corresponds to the usual Macdonald polynomial scalar product \cite{M}.

The $m$-symmetric Macdonald polynomials can be defined with the following
orthogonality/unitriangularity  characterization akin to that of the usual Macdonald polynomials.
\begin{proposition} \label{propdefmacdo} The $m$-symmetric Macdonald polynomials form the 
  unique basis $\{ P_\Lambda(x;q,t)\}_\Lambda$ of $R_m$ such that
  \begin{enumerate}
\item $\displaystyle{\langle P_\Lambda(x;q,t)\, ,\, P_\Omega(x;q,t)  \rangle_{q,t} = 0 {\rm ~if~} \Lambda\neq\Omega}  $ \quad {\rm (orthogonality)}    
\item $\displaystyle{P_\Lambda(x;q,t)= m_\Lambda + \sum_{\Omega < \Lambda} d_{\Lambda \Omega}(q,t) \, m_\Omega}$ \quad {\rm (unitriangularity)}
  \end{enumerate}
for certain coefficients $d_{\Lambda \Omega}(q,t) \in \mathbb Q(q,t)$. We recall that the dominance order on $m$-partitions was defined in \eqref{deforder}.  
\end{proposition}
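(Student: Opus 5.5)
The plan is to prove uniqueness by an induction along the dominance order, and existence by importing the construction of \cite{L,CL}: the $m$-symmetric Macdonald polynomials obtained by $t$-symmetrizing non-symmetric Macdonald polynomials. For those polynomials one checks the two properties (1) and (2) separately.

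\emph{Uniqueness.} Let $\{P_\Lambda\}$ and $\{P'_\Lambda\}$ both satisfy (1) and (2). Fix a degree $n$; the space of $m$-symmetric functions of degree $n$ is finite-dimensional and the order \eqref{deforder} restricts to a finite poset on it.
\begin{itemize}
\item Unitriangularity with respect to the same order makes the transition matrix from $\{m_\Omega\}$ to $\{P_\Omega\}$ unitriangular, hence invertible and unitriangular. So we can write $P'_\Lambda=P_\Lambda+\sum_{\Omega<\Lambda}c_{\Omega}P_\Omega$.
\item Induct on $\Lambda$ along a linear extension of the order, assuming $P'_\Omega=P_\Omega$ for all $\Omega<\Lambda$.
\item Pair both sides with $P_\Omega$ for a fixed $\Omega<\Lambda$, using orthogonality of both families. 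This gives $c_\Omega\langle P_\Omega,P_\Omega\rangle_{q,t}=0$.
\end{itemize}
It then remains to know that $\langle P_\Omega,P_\Omega\rangle_{q,t}\neq 0$ in $\mathbb Q(q,t)$. For this I would use the explicit norm formula of \cite{CL}, which is a product of nonzero factors of the form $(1-q^{a}t^{b})$. Alternatively, if one wants to avoid it, use a specialization (such as $q=t\to$ a Hall-type product, or $q=0$) where the Gram matrix of \eqref{eqscal} on a triangular basis is visibly nonsingular.

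\emph{Existence.} Take $P_\Lambda$ as in \cite{CL}. Namely, for $N\ge m+\ell(\lambda)$, apply the $t$-symmetrizer over $x_{m+1},\dots,x_N$ to the non-symmetric Macdonald polynomial $E_\eta$, where $\eta=(\pmb a,\lambda_{\rm rev},0,\dots)$ is suitably arranged, and normalize so that the coefficient of $m_\Lambda$ is $1$. We then need three facts.
\begin{enumerate}
\item \emph{Stability in $N$.} This yields a well-defined element of $R_m$.
\item \emph{Unitriangularity.} Using the known triangularity of $E_\eta$ in the Bruhat-type order on compositions, one shows that the monomials $x^{\pmb b}m_\mu$ surviving the partial symmetrization satisfy $\Omega^{(i)}\le\Lambda^{(i)}$ for every $i$. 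This is where the circles in the diagram enter: $\Omega^{(i)}$ records the partial sums after raising the first $i$ non-symmetric parts.
\item \emph{Orthogonality.} Show that $P_\Lambda$ is a joint eigenfunction of commuting operators: the Cherednik operators $Y_1,\dots,Y_m$ acting on the distinguished variables, together with a Macdonald-type operator $D$ acting on the symmetric part. These operators must be self-adjoint for $\langle\cdot,\cdot\rangle_{q,t}$, and their eigenvalue tuples must distinguish $m$-partitions of the same degree. The eigenvalues are monomials $q^{a_i}t^{(\cdot)}$ and $\sum_j q^{\lambda_j}t^{(\cdot)}$, so separation is a direct check.
\end{enumerate}

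\emph{Main obstacle.} The hard step is self-adjointness: the scalar product \eqref{eqscal} is defined combinatorially on the basis $p_\Lambda(x;t)=H_{\pmb a}(x;t)p_\lambda(x)$, not as a constant-term integral. I would first try to show that \eqref{eqscal} is the $N\to\infty$ limit, suitably normalized, of Cherednik's constant-term inner product restricted to functions symmetric in $x_{m+1},\dots,x_N$. To do this, check the following adjointness relations on the $p_\Lambda(x;t)$ basis:
\begin{itemize}
\item $T_i^*=T_i$, which is consistent with the factor $t^{{\rm Inv}(\pmb a)}$, since $T_i$ permutes the $H_{\pmb a}$ with factors $t$;
\item multiplication by $p_r$ is adjoint to $\frac{r(1-q^r)}{1-t^r}\partial_{p_r}$, which accounts for $z_\lambda(q,t)$;
\item multiplication by $x_i$ is adjoint to an operator involving $q$, which accounts for $q^{|\pmb a|}$.
\end{itemize}
Granting these relations, self-adjointness of the $Y_i$ and $D$ follows by expressing them through the $T_i$, the $x_i$, and the $q$-shift $\omega$. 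Orthogonality then follows because eigenvectors of a self-adjoint commuting family with distinct eigenvalue tuples are orthogonal.
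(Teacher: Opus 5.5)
The paper does not prove this proposition. It imports it from \cite{CL} (building on \cite{L}), where the $t$-symmetrized non-symmetric Macdonald polynomials are shown to satisfy (1) and (2). Your uniqueness argument is correct, and it can be simplified: you need neither the norm formula nor a specialization. The form $\langle\cdot,\cdot\rangle_{q,t}$ is diagonal on the homogeneous basis $\{p_\Lambda(x;t)\}$ with nonzero entries $q^{|\pmb a|}t^{{\rm Inv}(\pmb a)}z_\lambda(q,t)$. It is therefore nondegenerate on each homogeneous component, and a basis of pairwise orthogonal vectors for a nondegenerate form automatically has $\langle P_\Omega,P_\Omega\rangle_{q,t}\neq 0$. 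Your fallback would in fact be unsafe. At $q=0$ the form degenerates because of the factor $q^{|\pmb a|}$, and in any case you do not know a priori that the coefficients of $P_\Omega$ have no pole at the specialization.

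For existence, if you cite \cite{L,CL} for stability, triangularity and orthogonality, exactly as the paper does, your proposal is complete. As a self-contained argument, however, the sketches you give in their place leave genuine gaps.

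\emph{Triangularity.} The standard triangularity of $E_\eta$ says that $x^\nu$ occurs only if $\nu^+<\eta^+$, or $\nu^+=\eta^+$ with $\nu$ below $\eta$ in Bruhat order. When $\Omega^{(0)}\neq\Lambda^{(0)}$ this gives only $\Omega^{(0)}<\Lambda^{(0)}$, and says nothing about $\Omega^{(i)}\le\Lambda^{(i)}$ for $i\ge 1$. For example, take $\Lambda=(0;2,2)$ and $\Omega=(2;1,1)$. Then $\Omega^{(0)}=(2,1,1)<(2,2)=\Lambda^{(0)}$, but $\Omega^{(1)}=(3,1,1)>(2,2,1)=\Lambda^{(1)}$. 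So the vanishing of $d_{\Lambda\Omega}$ requires a finer triangularity, which you assert but do not establish.

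\emph{Orthogonality.} You grant self-adjointness, which is the entire difficulty, and the route as stated does not yet make sense on $R_m$. Cherednik's $Y_i$ and the shift $\omega$ act on polynomials in finitely many variables $x_1,\dots,x_N$. Stable, suitably normalized versions compatible with $x_N\mapsto 0$ would have to be constructed before one can speak of their adjoints for \eqref{eqscal}. The identification of \eqref{eqscal} with a normalized $N\to\infty$ limit of the constant-term product is also not something you can check formally on a basis. Already for $m=0$, the constant-term norm of $P_\lambda$ divided by $\langle 1,1\rangle_N$ differs from $\langle P_\lambda,P_\lambda\rangle_{q,t}$ by a $\lambda$-dependent factor involving $t^N$. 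For $\lambda=(1)$ this factor is $(1-t^N)/(1-qt^{N-1})$, which tends to $1$ only analytically. A self-contained proof would have to establish the relevant self-adjointness directly on $R_m$, for instance through a reproducing kernel for \eqref{eqscal} in the spirit of Macdonald's argument for $m=0$.
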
 
We will later need the squared norm of an $m$-symmetric Macdonald polynomial, which is given explicitly as \cite{CL}
\begin{equation} \label{squared}
\langle P_\Lambda(x;q,t)\, ,\, P_\Lambda(x;q,t)  \rangle_{q,t} =  q^{|\pmb a|}t^{{\rm Inv} (\pmb a)} \prod_{s \in \Lambda} \frac{1 -q^{\tilde a(s)+1}t^{\tilde \ell(s)}}{1 -q^{a(s)}t^{\ell(s)+1}}\, .
\end{equation}

\section{The (dual) $m$-symmetric Schur functions} \label{secdual}
We are now ready to introduce the (dual) $m$-symmetric Schur functions.
We will first introduce the dual $m$-symmetric Schur functions.  Then, by duality, the  
$m$-symmetric Schur functions will be defined.
In the following, it will prove convenient to use the plethystic notation \cite{Hag} in which, for a symmetric function $f$ and $X=x_1+x_2+\cdots$, we let $f[X]=f(x)=f(x_1,x_2,\dots)$.  More generally, we have using this notation that $f[X+x_1+\cdots+x_k]=f(x_1,\dots,x_k,x_1,x_2,\dots)$.

Let $\nu$ be a partition of length $\ell$.  For a sequence of alphabets $X_1,\dots,X_\ell$, where $\ell=\ell(\nu)$, the multi-Schur function $s_\nu(X_1,\dots,X_\ell)$ is defined as \cite{MacS}
\begin{equation} \label{eqmulti}
s_\nu(X_1,\dots,X_\ell) = \det \Bigl( h_{\nu_i-i+j}[X_i] \Bigr)_{1 \leq i,j \leq \ell}.
\end{equation}
Observe, from \eqref{eqSchur},
that $s_\nu(X_1,\dots,X_\ell)$ is equal to the usual Schur function
$s_\nu(x)$ whenever $X=X_1=X_2=\cdots=X_\ell$.

\begin{definition} \label{defdualS}
  The dual $m$-symmetric Schur functions $ s_{\Lambda}^*(x;t)$
  are defined recursively in the following way.  If $\Lambda=(\pmb a;\lambda)$ is dominant then
$$
  s_\Lambda^*(x;t)=s_\nu(X_1,\dots,X_\ell),
$$
where $\nu=\Lambda^{(0)}=\pmb a \cup \lambda$, and where
 $X_i$ stands for the alphabet $X+x_1+\cdots +x_k$ with $k$ the number of circles weakly above row $i$ in the diagram corresponding to $\Lambda$.
 Otherwise, if $a_i<a_{i+1}$  then
 \begin{equation} \label{recursis}
s^*_\Lambda(x;t)= T_i s_{\tilde \Lambda}^*(x;t),
 \end{equation}
where $\tilde \Lambda=s_i \Lambda$. This amounts to saying that
\begin{equation} \label{stsigma}
s^*_\Lambda(x;t)=  T_{\sigma^{-1}} s_{\Lambda^+}^*(x;t),
\end{equation}
where $\sigma$ is the shortest permutation such that $\sigma(\pmb a)=(a_{\sigma^{-1}(1)},\dots,a_{\sigma^{-1}(m)}) = \pmb a^+$. 
\end{definition}

\begin{example} The diagram associated to $(2,1;3,1)$ is 
$$
{\tableau[scY]{&& & \bl \\& & \bl \cercle{\rm 1} \\&  \bl \cercle{\rm 2}\\ & \bl  \\ }}
$$
from which we deduce that
   $$
s_{2,1;3,1}^*(x;t)= \left|
\begin{array}{cccc}
  h_3[X] & h_4[X] & h_5[X] & h_6[X ]\\
   h_1[X+x_1 ]& h_2[X+x_1] & h_3[X+x_1] & h_4[X+x_1] \\
   0 & h_0[X+x_1+x_2]  & h_1 [X+x_1+x_2]& h_2[X+x_1+x_2]\\
   0 & 0 & h_0[X+x_1+x_2] & h_1[X+x_1+x_2]
\end{array}   
\right| .
$$
\end{example}
\begin{remark} \label{remarkflag}
The multi-Schur functions that we use in Definition~\ref{defdualS} 
are essentially flagged Schur functions \cite{LS,W} in infinite alphabets instead of finite alphabets. For instance, if $X$ were equal to $y_{1}+\cdots+y_j$, the dual $m$-symmetric Schur function $s_{2,1;3,1}^*(x;t)$ would correspond in the language of \cite{W} to the flagged Schur functions $s_{3,2,1,1}(b)$ with flags $b_1=0,b_2=1,b_3=2$ and $b_4=2$ (with the understanding that 
the variables $y_1,\dots,y_j$
would not be constrained by any of the flags).
\end{remark}

A bilinear scalar product $\langle \cdot, \cdot \rangle_m$ on $R_m$ is defined by requiring that the power-sum basis be such that
\begin{equation} \label{scalprod}
\langle p_\Lambda(x;t),  p_\Omega(x;t)  \rangle_m = \delta_{\Lambda \Omega} t^{{\rm Inv} (\pmb a)} z_\lambda ,
\end{equation}
where we recall that
${\rm Inv} (\pmb a)$
is the number of inversions in $\pmb a$.
It can be shown that the dual $m$-symmetric Schur functions form a basis of $R_m$ \cite{L}.  The $m$-symmetric Schur functions $s_\Lambda(x;t)$ can thus be defined as the unique basis of $R_m$ such that
\begin{equation} \label{scalprodS}
\langle s_\Lambda(x;t),  s^*_\Omega(x;t)  \rangle_m = \delta_{\Lambda \Omega} t^{{\rm Inv} (\pmb a)}.
\end{equation}
For  $i=1,\dots,m-1$, the operator $T_i$ has a simple action on $s^*_\Lambda(x;t)$ by definition.  It also has the following simple action on $s_\Lambda(x;t)$:
\begin{equation} \label{Tis}
T_i  s_\Lambda(x;t) =
\left \{ \begin{array}{ll}
     s_{\tilde \Lambda}(x;t)  & {\rm if~} a_i > a_{i+1} \\
    (t-1)  s_{\Lambda}(x;t) +  t s_{\tilde \Lambda}(x;t)  & {\rm if~} a_i < a_{i+1} \\
     t s_{\Lambda}(x;t)  & {\rm if~} a_i = a_{i+1}
\end{array} \right .    ,
\end{equation}
where $\tilde \Lambda=(a_1,\dots,a_{i+1},a_i,\dots,a_m;\lambda)$.

\section{The $m$-symmetric Macdonald positivity conjecture} \label{secMain}
A positivity conjecture for the $m$-symmetric Macdonald polynomials in terms of $m$-symmetric Schur functions akin to the original Macdonald positivity conjecture \cite{M} (now theorem \cite{Haiman}) was stated in \cite{L}.
It relies on an extension of the notion of plethysm to $R_m$.
Recall that the plethysm relevant to Macdonald polynomials is the linear map on the ring of symmetric functions that sends the power-sum 
$p_\lambda$ to $p_\lambda /\prod_i (1-t^{\lambda_i})$ \cite{Ber}.
In the plethystic notation, this map is denoted as
$$
p_\lambda\left[\frac{X}{1-t}\right]=\frac{p_\lambda[X]}{\prod_i (1-t^{\lambda_i})}=\frac{p_\lambda(x)}{\prod_i (1-t^{\lambda_i})}.
$$
The notion of plethysm that we will need will simply be the linear map $R_m \to R_m$  defined on the $m$-symmetric power-sums $p_\Lambda(x;t)$ as
\begin{equation} \label{plethysm}
p_\Lambda\left[\frac{X}{1-t} ;t \right]= \frac{p_\Lambda[X;t] }{\prod_i (1-t^{\lambda_i})} =  \frac{p_\Lambda(x;t) }{\prod_i (1-t^{\lambda_i})}.
\end{equation}
We stress that the plethysm only depends on the symmetric part $\lambda$ of $\Lambda=(\pmb a; \lambda)$.

The integral form of the $m$-symmetric Macdonald polynomials is given by
\begin{equation} \label{intform}
J_\Lambda(x;q,t)= c_\Lambda(q,t) P_\Lambda(x;q,t),
\end{equation}
where
$$
c_\Lambda(q,t)= \prod_{s \in \Lambda} (1 -q^{a(s)}t^{\ell(s)+1}).
$$
Recall that the arm and leg-lengths were introduced in Section~\ref{sect2}.

It appears that the  $m$-symmetric Macdonald polynomials in their integral form
are, once plethystically transformed, positive in terms of the $m$-symmetric Schur functions.
\begin{conjecture} \label{mainconjec}
  The $m$-symmetric Macdonald polynomials in their integral form are such that
  \begin{equation} \label{Kostka}
  J_\Lambda\left[\frac{X}{1-t};q,t \right]=
  \sum_{\Omega} K_{\Omega \Lambda}(q,t) \, s_\Omega(x;t),
  \end{equation}
with $K_{\Omega \Lambda}(q,t) \in \mathbb N[q,t]$.
\end{conjecture}
 \begin{example} \label{expos}We have
\begin{align*}
  J_{1,0;2}\left[\frac{X}{1-t};q,t \right] & = t^2 s_{3,0;\emptyset} + qt^2 s_{0,3;\emptyset}+ q t \, s_{0,0;3} +(qt^3+t) s_{2,1;\emptyset}+(qt^2+t) s_{2,0;1}
  +(q^2t^3+t) s_{1,2;\emptyset} \\  & \qquad +(q^2t^2+1) s_{1,0;2}+(q^2t^2+qt) s_{0,2;1}+(q^2t^2+q) s_{0,1;2}+(q^2t+q) s_{0,0;2,1} \\
  & \qquad \qquad + qt^2 s_{1,1;1}+ qt \, s_{1,0;1,1} + q^2 t \, s_{0,1;1,1}+q^2s_{0,0;1,1,1}.
\end{align*}
\end{example}

In Section~\ref{sect8}  we will give a proof of Conjecture~\ref{mainconjec} in the case $t=1$ by providing
a combinatorial interpretation for the coefficients $K_{\Omega \Lambda}(q,1)$.
The combinatorial interpretation will imply 
in particular that, as can be appreciated in Example~\ref{expos},
$$
K_{\Omega \Lambda}(1,1) = \# \text{ of standard tableaux of shape } \mu \cup \pmb b.
$$

\section{A tableau expansion and the (dual) $m$-symmetric Schur functions at $t=1$.} \label{sectab}
In this section, we will provide a combinatorial interpretation for the expansion of the dual $m$-symmetric Schur functions (in the dominant case)  in  the $k_\Lambda(x)$ basis defined in \eqref{eqk}.
This combinatorial interpretation is essentially a rewriting of a result on flagged Schur functions \cite{W}.

A (skew) tableau $T$ of shape $\lambda/\mu$ is a filling of the skew diagram  $\lambda/\mu$  with integers such that the entries are strictly increasing along columns (from top to bottom) and weakly increasing along rows (from left to right). We first define an important set of tableaux.
\begin{definition} \label{defsets}
  For the $m$-partitions $\Lambda=(\pmb a;\lambda)$ and $\Omega=(\pmb b;\mu)$  of the same degree,  we define  $\setSD_{\Lambda \Omega}$ as the set of skew tableaux $T$ of shape $(\pmb a \cup \lambda)/\mu$ such that
the letter $i$ appears $b_i$ times and always within the first $a_i$ columns of $T$.     
\end{definition}
\begin{example} \label{exampleS}
  If $\Lambda=(4,4,2;3,2,1)$ and $\Omega=(1,3,1;4,3,2,1,1)$, the set
   $ \setSD_{\Lambda \Omega}$ contains the skew tableaux
  \begin{equation*}
 T_1=   {\small{\tableau[scY]{&& &\\& & & 2\\& & 2 \\ & 2  \\ & 3\\ 1 \\ }}} \qquad
  T_2=  {\small{\tableau[scY]{&& &\\& & & 2\\& & 2 \\ & 1  \\ & 3\\ 2 \\ }}} \qquad
  T_3=  {\small{\tableau[scY]{&& &\\& & & 2\\& & 1 \\ & 2  \\ & 3\\ 2 \\ }}} \qquad
  T_4=  {\small{\tableau[scY]{&& &\\& & & 1\\& & 2 \\ & 2  \\ & 3\\ 2 \\ }}} \qquad
   T_5=   {\small{\tableau[scY]{&& &\\& & & 2\\& & 2 \\ & 1  \\ & 2\\ 3 \\ }}} 
    \end{equation*}
\end{example}  
\begin{lemma} \label{lemmaskew}
  Suppose that  $T\in \setSD_{\Lambda \Omega}$ with
  $\Lambda$  dominant. If we let $T_\circ$ be obtained from $T$ by adjoining, for $i=1,\dots,m$, a square filled with the letter $i$ in the position of the 
  $i$-circle in $\Lambda$,
    then $T_\circ$ is a skew tableau.
\end{lemma}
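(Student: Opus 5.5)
We take $T_\circ$ to be the diagram of $T$ with one extra cell at each circle position, and check in turn the shape, the rows and the columns. The only input is the constraint built into Definition~\ref{defsets}: a letter $k$ of $T$ can only occur in columns $\le a_k$. Combined with dominance $a_1\ge\cdots\ge a_m$, this constraint links the size of a letter to the columns in which it may appear.

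\emph{Shape.} For $\Lambda$ dominant, the circles attached to rows of a given length $c$ occupy the topmost rows of length $c$ in $\pmb a\cup\lambda$, in increasing order from top to bottom. This is the convention illustrated in the example following the definition of the diagram. Turning the circles into squares therefore produces the Young diagram $\Lambda^{(m)}$. Since $\mu\subseteq \pmb a\cup\lambda\subseteq\Lambda^{(m)}$, the filling $T_\circ$ has skew shape $\Lambda^{(m)}/\mu$. Moreover, the cell to the left of a circle lies in $\pmb a\cup\lambda$ when $a_i\ge1$, and a circle has no cell of $T$ to its right. Thus only two things remain to be checked for the new cell of the $i$-circle, placed at $(r,a_i+1)$: that it is weakly larger than its left neighbour, and that it is strictly between its upper and lower neighbours.

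\emph{Columns.} Consider first the cell $(r-1,a_i+1)$ above the new cell. It is either another former circle, necessarily of a row of the same length, hence with smaller index by the ordering convention; or it is a letter $k$ of $T$ in column $a_i+1$. In the latter case $a_k\ge a_i+1>a_i$, so $k<i$ by dominance. Now consider the cell $(r+1,a_i+1)$ below. Row $r+1$ has length at most $a_i$ in $\pmb a\cup\lambda$, so this cell can only be a former circle in a row of length $a_i$. Such a circle has larger index, again by the ordering convention. Hence columns are strictly increasing.

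\emph{Rows.} This is the main step. Let $j$ be the entry of $T$ at $(r,a_i)$, to the left of the new cell (if this cell is in $\mu$ there is nothing to check). Since $j$ sits in column $a_i$, we have $a_j\ge a_i$. If $a_j>a_i$, dominance gives $j<i$. The delicate case is $a_j=a_i=c$, where dominance alone does not exclude $j>i$, and here I would use column strictness of $T$. Let $i,i+1,\dots,i+s$ be the indices with $a_\cdot=c$ that are $\ge i$; their circles sit in consecutive rows $r,r+1,\dots,r+s$, all of length $c$. Because $\mu$ is a partition and $(r,c)\notin\mu$, the $s$ cells $(r+1,c),\dots,(r+s,c)$ all belong to $T$. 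Their entries strictly exceed $j$, and each is a letter $k$ lying in column $c$, hence satisfies $a_k\ge c$, i.e.\ $k\le i+s$. So at least $s$ distinct values lie in $(j,i+s]$, which forces $j\le i$. Therefore rows are weakly increasing, and $T_\circ$ is a skew tableau.
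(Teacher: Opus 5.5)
Your proof is correct and follows the paper's approach. The upper neighbour is handled by $a_k\ge a_i+1\Rightarrow k<i$ via dominance. The left neighbour in the critical case $a_j=a_i$ is handled by the same counting argument: the $s$ cells below $j$ in column $a_i$ (the paper's ``$k$ circles below the $i$-circle'') would need distinct letters larger than $j$, drawn from too few admissible values. Your version is somewhat more explicit than the paper's, since it checks the shape $\Lambda^{(m)}/\mu$, the lower neighbour and strictness in columns, and uses that $\mu$ is a partition to ensure the cells below $j$ lie in $T$.
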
  
\begin{proof} Since $\Lambda^{(0)}=(\pmb a \cup \lambda)$
  and since the circles in $\Lambda$ are ordered from top to bottom in a given column, we only need to show that in $T$ there cannot be a letter larger than $i$ directly above the $i$-circle or directly to its left.  Observe that the $i$-circle lies in column $a_i+1$.  Since $\Lambda$ is dominant, a letter $j$ larger than $i$ can only occur in the first $a_j \leq a_i$ columns.  Hence the letter directly above the $i$-circle cannot be larger than $i$.  Now suppose that there is a letter $j>i$ directly to the left of the $i$-circle.  Let $c$ be the column in which the $i$-circle lies, and suppose that there are $k$ circles below the $i$-circle in column $c$. Since $\Lambda$ is dominant, this implies that there are only $k$ letters larger than $i$ that can appear in column $c-1$ of $T$.  But in column $c-1$ of $T$ there are at least $k$ cells below the cell in which the letter $j$ lies (because there are $k$ circles below the $i$-circle in column $c$ and those circles need to have a square to their left).  This is a contradiction since those $k$ cells cannot all be filled with letters larger than $j$ since $j>i$ and there are only $k$ letters larger than $i$.

\end{proof}

\begin{proposition} \label{dualSD} Let $\Lambda$ be dominant. Then
  \begin{equation} \label{eqD}
  s_{\Lambda}^*(x;t)= \sum_{\Omega} D_{\Lambda \Omega}  \, k_{\Omega}(x),
  \end{equation}
  where $D_{\Lambda \Omega}=\#\mathcal S_{\Lambda \Omega}$, and where we recall that
  $k_{\Omega}(x)$ was defined in \eqref{eqk}.
  \end{proposition}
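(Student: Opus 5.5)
We prove Proposition~\ref{dualSD} by expanding the multi-Schur determinant combinatorially, in the spirit of the flagged Schur function results of \cite{W}. For dominant $\Lambda$, Definition~\ref{defdualS} gives $s^*_\Lambda(x;t)=s_\nu(X_1,\dots,X_\ell)$ with $\nu=\pmb a\cup\lambda$. Here $X_i=X+x_1+\cdots+x_{k_i}$, where $k_i$ is the number of circles weakly above row $i$; the sequence $k_i$ is weakly increasing in $i$. The first step is to rewrite each alphabet as an ordered alphabet in which the letters $1,\dots,m$ (for $x_1,\dots,x_m$) come \emph{before} the letters of $X$. The determinant $\det(h_{\nu_i-i+j}[X_i])$ is then a flagged Jacobi--Trudi determinant. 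By the Lindstr\"om--Gessel--Viennot argument (or directly by \cite{W}, Remark~\ref{remarkflag}), it equals the generating function of semistandard tableaux $U$ of shape $\nu$ such that the entries of $U$ in row $i$ that come from $\{1,\dots,m\}$ are at most $k_i$. The entries from $X$ are unconstrained and larger than all of $1,\dots,m$. The flag condition $k_1\le k_2\le\cdots$ is exactly what makes the nonintersecting-paths argument work.

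The second step splits each such tableau $U$ into two parts. One part is the subtableau occupied by the letters $1,\dots,m$, which is a straight shape $\mu'\subseteq\nu$ because these letters are smallest. The other part is the skew tableau of shape $\nu/\mu'$ filled with letters from $X$. Summing over the skew part for fixed $\mu'$ gives $s_{\nu/\mu'}[X]$. This is not yet the form we want: we need $x^{\pmb b}s_\mu[X]$ with a straight shape. The better route is to reverse the order, so that the letters of $X$ come \emph{first} and the letters $1,\dots,m$ come last. Schur functions in the union of two alphabets are symmetric under swapping them, but the flag condition is not. I would therefore instead work with the alphabet-by-alphabet expansion $h_r[X+x_1+\cdots+x_k]=\sum_j h_{r-j}[X]\,h_j(x_1,\dots,x_k)$. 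Substituting this row by row and applying a Gessel--Viennot argument with $X$-steps first, the $X$-part becomes a straight Schur function $s_\mu[X]$. The remaining letters $1,\dots,m$ then fill a skew tableau of shape $\nu/\mu$, and in row $i$ they satisfy the bound $\le k_i$. Collecting terms gives $\sum_\mu\sum_T x^{\mathrm{wt}(T)}s_\mu(x)$, where $T$ ranges over skew tableaux of shape $\nu/\mu$ with flag bounds. Grouping by $\pmb b=\mathrm{wt}(T)$ turns this into $\sum_\Omega(\#\cdot)\,k_\Omega(x)$.

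The third step, which I expect to be the main obstacle, is to show that the row flag condition (letter $j$ in row $i$ only if $j\le k_i$) is equivalent, for skew tableaux of shape $(\pmb a\cup\lambda)/\mu$, to the column condition in Definition~\ref{defsets} (letter $j$ only within the first $a_j$ columns). The key reformulation is that $j\le k_i$ means the $j$-circle lies weakly above row $i$. Suppose letter $j$ sits in row $i$, column $c$. If the $j$-circle is weakly above row $i$, its row has length $a_j$ and lies weakly above row $i$. That does not immediately bound $c\le a_j$, so the two conditions are not equivalent cell by cell. I would prove equality of the counts through a column-strictness argument similar to Lemma~\ref{lemmaskew}, using dominance of $\pmb a$. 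If that fails, I would give a bijection between the two families of tableaux, or verify that both conditions define the same set once semistandardness is imposed. In the latter case the argument is that a letter $j$ placed beyond column $a_j$ forces, by column strictness, a letter $>j$ below it in a row whose flag is too small.
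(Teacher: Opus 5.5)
Once you switch to the ordering where the letters of $X$ precede $1,\dots,m$, your route is the paper's: Wachs' flagged-tableau expansion of $s_\nu(X_1,\dots,X_\ell)$, then splitting each tableau into a straight-shape part in the letters of $X$ (summing to $s_\mu$) and a skew tableau of shape $\nu/\mu$ in the letters $1,\dots,m$ obeying the row flags $k_i$. Your first ordering, however, is not merely inconvenient but wrong. The flagged Jacobi--Trudi identity requires each $X_i$ to be an initial segment of the ordered alphabet, and this fails when $1<\cdots<m$ precede $X$. For $\Lambda=(1;2)$ the determinant is $h_2[X]\,h_1[X+x_1]-h_3[X]=s_{21}[X]+x_1h_2[X]$. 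With your first ordering, the cell in row $2$ sits below a letter of $X$ and can never hold $1$, so the term $x_1h_2[X]$ is lost. Your remark that $k_1\le k_2\le\cdots$ is ``exactly what makes'' the path argument work is therefore not correct as stated.

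The genuine gap is step three, which you leave open and whose difficulty you place in the wrong direction. Let $r_j$ be the row of the $j$-circle, so $\nu_{r_j}=a_j$. Dominance gives $r_1<r_2<\cdots<r_m$, and the flag condition for a letter $j$ in row $i$ reads $r_j\le i$.

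Contrary to what you write, flag $\Rightarrow$ column is immediate cell by cell: $c\le\nu_i\le\nu_{r_j}=a_j$ because $\nu$ is a partition. Your proposed argument (a $j$ beyond column $a_j$ forces a larger letter below it) targets only this trivial direction. It is also not right as stated, since there may be no cell below that $j$.

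The direction that actually needs column strictness is column $\Rightarrow$ flag. Suppose $j$ sits at $(i,c)$ with $c\le a_j$ but $i<r_j$. Then $r_j\le\nu'_c$, and the $\nu'_c-i$ cells below it in column $c$ lie in $\nu/\mu$. So they carry distinct letters $j'>j$, which by the column condition satisfy $a_{j'}\ge c$, i.e.\ $r_j<r_{j'}\le\nu'_c$. Since distinct circles occupy distinct rows, at most $\nu'_c-r_j<\nu'_c-i$ letters qualify, a contradiction.

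This pigeonhole argument (the same one as in Lemma~\ref{lemmaskew}) is what identifies the flagged skew tableaux with $\mathcal S_{\Lambda\Omega}$. Without it, your expansion counts a set you have not shown to equal $\mathcal S_{\Lambda\Omega}$. The contingency plans you list (a bijection, or a comparison ``once semistandardness is imposed'') are not arguments.
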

\begin{proof}  From Definition~\ref{defdualS}, if  $\Lambda=(\pmb a;\lambda)$
is dominant then
$$
  s_\Lambda^*(x;t)=s_\nu(X_1,\dots,X_\ell),
$$
where $\nu=\Lambda^{(0)}=\pmb a \cup \lambda$, and where
 $X_i$ stands for the alphabet $X+x_1+\cdots +x_k$ with $k$ the number of circles weakly above row $i$ in the diagram corresponding to $\Lambda$.
Now, following Remark~\ref{remarkflag}, let $X=y_1+y_2+\cdots$ with the understanding that the variables $y_1,y_2,\dots$ are not constrained by any of the flags.  Suppose also that the letters $\hat 1, \hat 2,\dots$ and $1,\dots,m$ are ordered in the following way:
 $$
 \hat 1 < \hat 2 < \cdots < 1 <2 \cdots <m.
 $$
 Given a tableau $R$, let $(x,y)^R$ stand for the monomial with the letter $x_i$ (resp. $y_i$)
 having power $j$
 if $i$ (resp. $\hat i$) occurs $j$ times in $R$.
It is shown in  \cite{W} that
  $$
 s_{\nu}(X_1,\dots,X_\ell) = \sum_R (x,y)^R,
 $$
 where the sum is over all tableaux $R$ of shape $\nu$ in the letters $\hat 1, \hat 2,\cdots$ and $1,\dots,m$
 such that in the $i^{th}$ row the letters are all smaller than the number of circles weakly above row $i$.  Since $\Lambda$ is dominant, the circles  are ordered from 1 to $m$ when reading from top to bottom. This implies that the letter $k$ can lie in any row weakly below the row in which lies the circle $k$. Since the letters  larger than $k$ cannot be put in any row weakly above the row in which lies the circle $k$, we have, equivalently, that the letter $k$ can only lie within the first $a_k$ columns of $R$.

 Hence, given a tableau $R$ as above, the letters $\hat 1,\hat 2,\dots$ in $R$ form a tableau $Q$ of shape $\mu$, while the letters $1,2,\dots,m$ form a skew-tableau $T$ of shape $\nu/\mu$ where the letter $i$ appears always within the first $a_i$ columns of $T$. We thus have that
 $$
 s_{\nu}(X_1,\dots,X_\ell) = \sum_R (x,y)^R = \sum_{Q,T} y^Q x^T = \sum_{\mu} \sum_{T} s_{\mu}(y) x^T,
 $$
 where the last sum is over all skew-tableaux of shape  $\nu/\mu$ where the letter $i$ appears always within the first $a_i$ columns of $T$. Letting $y=x$ in the previous expression, this gives immediately that
 $$
 s_\Lambda^*(x;t)=
 s_{\nu}(X_1,\dots,X_\ell) = \sum_\Omega D_{\Lambda \Omega} k_\Omega(x),
 $$
where, for $\Omega=(\pmb b;\mu)$, $D_{\Lambda \Omega}$ is the number of tableaux $T$ of shape $\nu/\mu$ such that the letter  $i$ appears $b_i$ times and always within the first $a_i$ columns of $T$.  That is, $D_{\Lambda \Omega}=\#\mathcal S_{\Lambda \Omega}$. 
\end{proof}

At $t=1$, the scalar product \eqref{scalprod} is such that
\begin{equation} \label{eqdual}
\langle s_\Lambda(x;1), s^*_\Omega(x;1) \rangle_m^{t=1} = \delta_{\Lambda \Omega}
\qquad {\rm and}\qquad \langle k_\Lambda(x), k_\Omega(x) \rangle_m^{t=1} = \delta_{\Lambda \Omega} .
\end{equation}
If, for an arbitrary $\Lambda$, we let $D_{\Lambda \Omega}$ be such that
$$
s^*_\Lambda(x;1)= \sum_{\Omega} D_{\Lambda \Omega} k_{\Omega}(x),
$$
then the dualities in \eqref{eqdual} imply that
\begin{equation} \label{eqkins}
k_{\Omega}(x) = \sum_{\Lambda} D_{\Lambda \Omega} s_{\Lambda}(x;1).
\end{equation}
From Proposition~\ref{dualSD}, we have a combinatorial interpretation for the coefficient of $s_\Lambda(x;1)$ in the expansion of $k_{\Omega}(x)$ whenever $\Lambda$ is dominant. This will prove important in Section~\ref{sect8}.

\section{The case $t=1$ of the $m$-symmetric Macdonald polynomials} \label{sect1Mac}
We first prove that the functions
$$
h_\Lambda(x;q) :=  h_{a_1}\left[q^{-1}x_1+X \right]\cdots  h_{a_m}\left[q^{-1}x_m+X \right]  h_\lambda[X] 
$$
are dual to the monomial $m$-symmetric basis $\{m_\Lambda(x)\}$ with respect to the scalar product $\langle \cdot, \cdot \rangle'$ defined as
\begin{equation} \label{eqscalq}
\langle \, p_\Lambda(x)\, ,\, p_\Omega(x)  \, \rangle' =\delta_{\Lambda \Omega} \, q^{|\pmb a|}z_\lambda,
\end{equation}
where the $m$-symmetric power-sum basis  was introduced in \eqref{eqp}.
An elementary result in symmetric function theory states that \cite{M}
\begin{equation} \label{eqelem}
\frac{1}{ \prod_{j,k=1} (1-x_j y_k)  } = \sum_{\lambda} z_\lambda^{-1} p_\lambda(x) p_\lambda(y) = \sum_{\lambda} m_\lambda(x) h_\lambda(y). 
\end{equation}
Using
\begin{equation} \label{eqeasy}
 \frac{1}{\prod_{i=1}^m (1-q^{-1}x_i y_i)} = \sum_{\pmb a} q^{-|\pmb a|} x^{\pmb a} y^{\pmb a},
 \end{equation}
it then immediately follows that
\begin{equation} \label{kernel}
\frac{1}{\prod_{i=1}^m (1-q^{-1}x_i y_i)} \cdot \frac{1}{ \prod_{j,k=1} (1-x_j y_k)  } = \sum_\Lambda q^{-|\pmb a|}  z_\lambda^{-1} p_\Lambda(x) p_\Lambda(y),
\end{equation}
which means that the left-hand-side of the previous identity
is a reproducing kernel for the scalar product $\langle \cdot, \cdot \rangle'$.

\begin{proposition} \label{propmhq}
  We have that
$$
  \frac{1}{\prod_{i=1}^m (1-q^{-1}x_i y_i)} \cdot \frac{1}{ \prod_{j,k=1} (1-x_j y_k)  } = \sum_\Lambda m_\Lambda(x) h_\Lambda(y;q)
  $$
  or, equivalently, that
  $$
\langle \, m_\Lambda(x)\, ,\, h_\Omega(x;q)  \, \rangle' = \delta_{\Lambda \Omega}.
$$
\end{proposition}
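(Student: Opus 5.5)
The plan is to expand the kernel directly as a product of two generating functions and to recognize the $h_\Lambda(y;q)$ in the result. The equivalence of the two formulations is then standard: by \eqref{kernel} the left-hand side is the reproducing kernel $\sum_\Lambda q^{-|\pmb a|}z_\lambda^{-1}p_\Lambda(x)p_\Lambda(y)$ for $\langle\cdot,\cdot\rangle'$. The usual linear-algebra argument (as in \cite{M} for $m=0$) shows that, for two bases $\{u_\Lambda\}$ and $\{v_\Lambda\}$ of $R_m$ graded by degree, $\sum_\Lambda u_\Lambda(x)v_\Lambda(y)$ equals this kernel if and only if $\langle u_\Lambda,v_\Omega\rangle'=\delta_{\Lambda\Omega}$. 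So the whole task is to prove the first identity.

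First, split the variables $x=(x_1,\dots,x_m)\cup(x_{m+1},x_{m+2},\dots)$ in the factor $\prod_{j,k}(1-x_jy_k)^{-1}$. For $j>m$, the factor $\prod_{j>m,\,k}(1-x_jy_k)^{-1}$ is, by \eqref{eqelem} applied to the alphabets $x_{m+1},x_{m+2},\dots$ and $Y$, equal to $\sum_\lambda m_\lambda(x_{m+1},x_{m+2},\dots)\,h_\lambda[Y]$. For $j=i\le m$, combine $\prod_k(1-x_iy_k)^{-1}$ with $(1-q^{-1}x_iy_i)^{-1}$. The product is $\sum_{a\ge0}x_i^a\,h_a[Y+q^{-1}y_i]$, since $\prod_{z\in Z}(1-uz)^{-1}=\sum_a u^a h_a[Z]$ for any alphabet $Z$, here applied with $Z=Y+q^{-1}y_i$. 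Concretely, $h_a[Y+q^{-1}y_i]=\sum_{r}q^{-r}y_i^r h_{a-r}[Y]$, which is exactly the coefficient of $x_i^a$ in $\sum_r q^{-r}x_i^ry_i^r\cdot\sum_s x_i^sh_s[Y]$.

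Multiplying the $m+1$ factors gives
$$\sum_{\pmb a,\lambda}x^{\pmb a}m_\lambda(x_{m+1},\dots)\prod_{i=1}^m h_{a_i}[q^{-1}y_i+Y]\,h_\lambda[Y]=\sum_\Lambda m_\Lambda(x)h_\Lambda(y;q),$$
which is the claim.

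The only delicate point is the bookkeeping of which variables are symmetric. The $h_\lambda[Y]$ factor uses the full alphabet $Y=y_1+y_2+\cdots$, including $y_1,\dots,y_m$, while $m_\lambda$ uses only $x_{m+1},\dots$. This matches the asymmetry between $p_\Lambda$ (where $p_\lambda$ starts at $x_1$) and $m_\Lambda$ (where $m_\lambda$ starts at $x_{m+1}$). Also, all expansions are formal power series in $x$, so there are no convergence issues.
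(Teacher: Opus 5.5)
Your proof is correct and follows essentially the same route as the paper. Both split off $x_1,\dots,x_m$, apply \eqref{eqelem} to the variables $x_{m+1},x_{m+2},\dots$, and then recombine $(1-q^{-1}x_iy_i)^{-1}$ with $\prod_k(1-x_iy_k)^{-1}$ through the identity $h_a[q^{-1}y_i+Y]=\sum_c q^{-c}y_i^c h_{a-c}[Y]$. The differences are only cosmetic: you read off the per-variable generating function of $h_a$ in the alphabet $Y+q^{-1}y_i$ directly instead of expanding first and recombining coefficientwise, and you spell out the standard kernel/duality equivalence that the paper takes for granted.
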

\begin{proof}
From \eqref{eqelem}, we obtain that 
$$
\frac{1}{ \prod_{j=1}^m \prod_{k} (1-x_j y_k)  }
= \sum_\Lambda m_\lambda(x_1,\dots,x_m) h_\lambda(y)
=
\sum_{\pmb b} x^{\pmb b} h_{\pmb b}(y),
$$
which implies that
  $$
 \frac{1}{ \prod_{j,k} (1-x_j y_k)  } =  \frac{1}{ \prod_{j=1}^m \prod_{k} (1-x_j y_k)  }\cdot  \frac{1}{ \prod_{j\geq m+1} \prod_{k} (1-x_j y_k)  } =
\sum_{\pmb b} x^{\pmb b} h_{\pmb b}(y)  \sum_{\lambda} m_\lambda(x_{m+1},x_{m+2},\dots) h_\lambda(y).
 $$
Hence, using \eqref{eqeasy}, we get that
\begin{equation} \label{toprove}
  \frac{1}{\prod_{i=1}^m (1-q^{-1}x_i y_i)} \cdot \frac{1}{ \prod_{j,k} (1-x_j y_k)  } = \sum_{\pmb c} q^{-|\pmb c|} x^{\pmb c} y^{\pmb c}  \sum_{\pmb b} x^{\pmb b} h_{\pmb b}(y)  \sum_{\lambda} m_\lambda(x_{m+1},x_{m+2},\dots) h_\lambda(y).
 \end{equation}
Now, owing to  \cite{M}
$$
h_a[q^{-1}y+Y]= \sum_{c=0}^{a} q^{-c} y^{c} h_{a-c}[Y],
$$
we deduce that
 $$
\sum_{\pmb b,\pmb c} q^{-|\pmb c|} x^{\pmb c} y^{\pmb c}  x^{\pmb b} h_{\pmb b}(y) =
\sum_{\pmb a} \sum_{\pmb c \subseteq \pmb a} q^{-|\pmb c|} x^{\pmb a} y^{\pmb c}  h_{\pmb a-\pmb c}(y) = \sum_{\pmb a}  x^{\pmb a}  h_{a_1}[q^{-1}y_1+Y] \cdots h_{a_m}[q^{-1}y_m+Y].  
$$
Inserting the previous equation in the right-hand-side of \eqref{toprove}, we finally get
$$
 \frac{1}{\prod_{i=1}^m (1-q^{-1}x_i y_i)} \cdot \frac{1}{ \prod_{j,k} (1-x_j y_k)  } = \sum_{\pmb a}  x^{\pmb a}  h_{a_1}[q^{-1}y_1+Y] \cdots h_{a_m}[q^{-1}y_m+Y]    \sum_{\lambda} m_\lambda(x_{m+1},x_{m+2},\dots) h_\lambda(y),
$$
which proves the proposition.
\end{proof}  
We now obtain the limit as $t \to 1$ of the modified version of the $m$-symmetric Macdonald polynomials.  Recall that the integral form $J_\Lambda(x;q,t)$ of the $m$-symmetric Macdonald polynomials was introduced in \eqref{intform}.
\begin{proposition} \label{propJ1}
  For $\Lambda=(a_1,\dots, a_m;\lambda)$, let
$$
(q;q)_\Lambda = (q;q)_{a_1} \cdots  (q;q)_{a_m} (q;q)_{\lambda_1} \cdots (q;q)_{\lambda_{\ell(\lambda)}},
$$
where $(q;q)_r=(1-q)(1-q^2)\cdots (1-q^r)$.  We have that
$$
\lim_{t \to 1} J_\Lambda\left[\frac{X}{1-t};q,t\right]=
q^{|\pmb a|}(q;q)_\Lambda h_\Lambda\left[ \frac{X}{1-q};q\right]
=(q;q)_\Lambda
 h_{a_1}\left[x_1+\frac{qX}{1-q} \right]\cdots  h_{a_m}\left[x_m+\frac{qX}{1-q} \right]  h_\lambda\left[\frac{X}{1-q}\right] .
$$
\end{proposition}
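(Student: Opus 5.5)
The plan is to compute the limit $t\to1$ of $P_\Lambda(x;q,t)$ through its characterization in Proposition~\ref{propdefmacdo}, then multiply by the limit of $c_\Lambda(q,t)$ and apply the plethysm.

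\textbf{Step 1: the scalar product at $t=1$.} At $t=1$ we have $H_{\pmb a}(x;1)=x^{\pmb a}$, so $p_\Lambda(x;1)=p_\Lambda(x)$. Since $T_i\to K_{i,i+1}$, the plethysm \eqref{plethysm} divides only the $p_\lambda$ part by $\prod_i(1-t^{\lambda_i})$. Hence the scalar product \eqref{eqscal}, rescaled suitably, behaves like $\langle\cdot,\cdot\rangle'$ after the substitution $X\mapsto X/(1-q)$. Precisely, $z_\lambda(q,1)$ has the factors $(1-q^{\lambda_i})/(1-t^{\lambda_i})$, which diverge as $t\to 1$. I will therefore work with the modified functions $\tilde P_\Lambda=P_\Lambda[X/(1-t)]$, or equivalently the substitution $p_r\mapsto p_r(1-t^r)/(1-q^r)$. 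The goal is to show that $\lim_{t\to1}P_\Lambda(x;q,t)=P_\Lambda(x;q,1)$ exists and satisfies
\[
\langle P_\Lambda, f\rangle_{q,1}=0 \quad\text{for all } f \text{ spanned by } m_\Omega,\ \Omega<\Lambda .
\]

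\textbf{Step 2: identify the limit.} I claim that
\[
\lim_{t\to 1}P_\Lambda\Bigl[\tfrac{X}{1-t}\Bigr]\cdot(\text{normalization}) \;\propto\; h_\Lambda\Bigl[\tfrac{X}{1-q};q\Bigr].
\]
The argument goes as follows. By Proposition~\ref{propmhq}, $\{h_\Omega(x;q)\}$ is dual to $\{m_\Omega\}$ under $\langle\cdot,\cdot\rangle'$, and the map $X\mapsto X/(1-q)$ intertwines $\langle\cdot,\cdot\rangle'$ with the $t$-scaled limit of $\langle\cdot,\cdot\rangle_{q,t}$. Orthogonality together with unitriangularity then forces the dual-basis element of $m_\Lambda$, namely the $\langle\cdot,\cdot\rangle_{q,t}$-dual $Q_\Lambda=P_\Lambda/\langle P_\Lambda,P_\Lambda\rangle_{q,t}$, to satisfy $Q_\Lambda=h$-type dual of $m_\Lambda$ plus terms indexed by $\Omega>\Lambda$.

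The key point is that the triangularity must collapse at $t=1$. The limit of $P_\Lambda[X/(1-t)]$, multiplied by $c_\Lambda$, has to be a single product. In the usual case this holds because $P_\lambda(x;q,1)=m$-triangular and $J_\lambda[X/(1-t)]\to(q;q)_\lambda h_\lambda[X/(1-q)]$. Here I will instead pass through the $Q_\Lambda$ side: show that $\lim Q_\Lambda[X/(1-t)]$, up to the norm factor, is an element of the span of $\{h_\Omega[X/(1-q);q]\}_{\Omega\ge\Lambda}$. I will then show that only $\Omega=\Lambda$ survives. For this I use that at $t=1$, $P_\Lambda$ becomes a product of factors and is therefore multiplicative: the $m$-symmetric part factors as $x^{\pmb a}$ times a $\lambda$-part.

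\textbf{Step 3: the constant.} Compute
\[
\lim_{t\to1} c_\Lambda(q,t)\big/\langle P_\Lambda,P_\Lambda\rangle_{q,t}
\]
using \eqref{squared}. Each cell contributes $(1-q^{\tilde a(s)+1}t^{\tilde\ell(s)})\to(1-q^{\tilde a(s)+1})$. Cells with $a(s)=0$ give $c_\Lambda$ factors $(1-t^{\ell(s)+1})$, and these combine with the $(1-t^{\lambda_i})$ factors from the plethysm to produce finite limits. Row by row, the product $\prod(1-q^{\tilde a(s)+1})$ gives exactly $(q;q)_{a_i}$ or $(q;q)_{\lambda_j}$, since $\tilde a$ ignores the circle. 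The factor $q^{|\pmb a|}$ from \eqref{squared} accounts for the $q^{|\pmb a|}$ in the statement.

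The final equality follows from the identity $q^{a}h_a[q^{-1}x+Y]=h_a[x+qY]$ with $Y=X/(1-q)$. I expect the main obstacle to be Step 2, rigorously justifying that the limit exists and equals the single dual element rather than a combination. If the multiplicativity argument fails there, I would first try matching leading coefficients along with the norm computed from $h_\Lambda$ directly.
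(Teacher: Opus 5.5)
Your framework is the right one (the dual of $P_\Lambda$, Proposition~\ref{propmhq}, the norm formula \eqref{squared}). However, Step~2 has a genuine gap: nothing you propose forces the terms indexed by $\Omega>\Lambda$ to vanish. The phrase ``$P_\Lambda$ becomes a product of factors and is therefore multiplicative'' is not an argument. Nor can orthogonality plus triangularity simply be passed to $t=1$: as you observe, $\langle\cdot,\cdot\rangle_{q,t}$ blows up there, so the goal you set in Step~1 is not well posed.

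The missing input is the specialization $\lim_{t\to1}P_\Lambda(x;q,t)=m_\Lambda(x)$, i.e.\ $d_{\Lambda\Omega}(q,t)\to0$ for $\Omega<\Lambda$. The paper takes this from \cite{CL}. Given the norm formula, the proposition essentially amounts to this fact, so it must be proved or cited.

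Once you have it, the collapse is automatic. Use the form $\langle p_\Lambda(x;t),p_\Omega(x;t)\rangle'_{q,t}=\delta_{\Lambda\Omega}q^{|\pmb a|}t^{{\rm Inv}(\pmb a)}z_\lambda$. It is regular at $t=1$ with limit $\langle\cdot,\cdot\rangle'$, and it satisfies $\langle f,g\rangle_{q,t}=\langle f,g[X(1-q)/(1-t)]\rangle'_{q,t}$. Then, for every $t$,
\[
\bigl\langle P_\Omega,\,J_\Lambda[X(1-q)/(1-t)]\bigr\rangle'_{q,t}=\langle P_\Omega,J_\Lambda\rangle_{q,t}=\delta_{\Omega\Lambda}\,q^{|\pmb a|}t^{{\rm Inv}(\pmb a)}\prod_{s\in\Lambda}\bigl(1-q^{\tilde a(s)+1}t^{\tilde \ell(s)}\bigr)
\]
holds exactly. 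So $J_\Lambda[X(1-q)/(1-t)]$ is a fixed multiple of the $\langle\cdot,\cdot\rangle'_{q,t}$-dual of $P_\Lambda$. Since $P_\Omega\to m_\Omega$, these dual bases converge to the $\langle\cdot,\cdot\rangle'$-dual of $\{m_\Omega\}$, which is $\{h_\Omega(x;q)\}$ by Proposition~\ref{propmhq}. This also settles your worry about the existence of the limit.

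Step~3 is also muddled. The relevant constant is $c_\Lambda\langle P_\Lambda,P_\Lambda\rangle_{q,t}$, because $J_\Lambda=c_\Lambda\langle P_\Lambda,P_\Lambda\rangle_{q,t}\,Q_\Lambda$; it is not the quotient. In that product $c_\Lambda$ cancels the whole denominator of \eqref{squared}, so the factors $1-t^{\ell(s)+1}$ never need to be played against plethystic poles.

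Doing the latter with $\lim P_\Lambda$ in hand would actually give the wrong answer. For $m=0$ and $\lambda=(2)$, one has $\lim_{t\to1}c_{(2)}\,m_2[X/(1-t)]=\frac{1-q}{2}p_2$, while the true limit is $\frac{1+q}{2}p_1^2+\frac{1-q}{2}p_2$. The missing term comes from the correction $\frac{(1+q)(1-t)}{1-qt}m_{11}$ in $P_{(2)}$, which vanishes at $t=1$ but meets a double pole under the plethysm. That is why the argument has to go through the dual basis rather than through $\lim P_\Lambda$ followed by the plethysm.
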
  
\begin{proof}
Define yet another scalar product as  
\begin{equation} \label{eqscal1}
\langle p_\Lambda(x;t)\, ,\, p_\Omega(x;t)  \rangle_{q,t}' =\delta_{\Lambda \Omega} \, q^{|\pmb a|}t^{{\rm Inv} (\pmb a)} z_\lambda
\end{equation}
and observe that
\begin{equation} \label{eqlimit}
\lim_{t \to 1} \,  \langle p_\Lambda(x;t)\, ,\, p_\Omega(x;t)  \rangle_{q,t}' =\delta_{\Lambda \Omega} \, q^{|\pmb a|} z_\lambda =   \langle p_\Lambda(x)\, ,\, p_\Omega(x)  \rangle'
 =  \langle \lim_{t \to 1} p_\Lambda(x;t)\, ,\, \lim_{t \to 1} p_\Omega(x;t)  \rangle',
 \end{equation}
 where the scalar product $ \langle \cdot \, ,\, \cdot  \rangle'$ was defined in
\eqref{eqscalq}. It is also immediate that
$$
\left \langle p_\Lambda[X;t]\, ,\, p_\Omega\left[\frac{X(1-q)}{(1-t)} ; t\right]  \right \rangle_{q,t}'= \delta_{\Lambda \Omega} \, q^{|\pmb a|}t^{{\rm Inv} (\pmb a)} z_\lambda(q,t) = \left \langle p_\Lambda(x;t)\, ,\, p_\Omega(x;t)  \right \rangle_{q,t} .
$$
Therefore, from $J_\Lambda(x;q,t)=c_\Lambda(q,t) P_\Lambda(x;q,t)$ and \eqref{squared}, we get
\begin{align*}
\left \langle P_\Lambda[X;q,t]\, ,\, J_\Omega\left[\frac{X(1-q)}{(1-t)} ;q, t\right]  \right \rangle_{q,t}'= \left \langle P_\Lambda(x;t)\, ,\, J_\Omega(x;t)  \right \rangle_{q,t} 
& =\delta_{\Lambda \Omega} c_{\Lambda}(q,t)  q^{|\pmb a|}t^{{\rm Inv} (\pmb a)} \prod_{s \in \Lambda} \frac{1 -q^{\tilde a(s)+1}t^{\tilde \ell(s)}}{1 -q^{a(s)}t^{\ell(s)+1}} \\
& = \delta_{\Lambda \Omega}  q^{|\pmb a|}t^{{\rm Inv} (\pmb a)} \prod_{s \in \Lambda} (1 -q^{\tilde a(s)+1}t^{\tilde \ell(s)}).
\end{align*}
Using $\lim_{t \to 1} P_{\Lambda}(x;q,t)=m_\Lambda(x)$ \cite{CL}, we thus deduce from \eqref{eqlimit} that
\begin{align*}
&  \left \langle m_\Lambda[X]\, ,\, \lim_{t \to 1} J_\Omega\left[\frac{X(1-q)}{(1-t)} ;q, t\right]  \right \rangle' \\
& \qquad \qquad  
=\lim_{t \to 1} \left \langle P_\Lambda[X;q,t]\, ,\, J_\Omega\left[\frac{X(1-q)}{(1-t)} ;q, t\right]  \right \rangle_{q,t}'= \delta_{\Lambda \Omega}  q^{|\pmb a|} \prod_{s \in \Lambda}   (1 -q^{\tilde a(s)+1}) = \delta_{\Lambda \Omega}  q^{|\pmb a|} (q;q)_{\Lambda} .
\end{align*}
Hence, from Proposition~\ref{propmhq}, we have that
$$
\lim_{t \to 1} J_\Lambda\left[\frac{X(1-q)}{(1-t)} ;q, t\right] =  q^{|\pmb a|} (q;q)_{\Lambda}  h_\Lambda(x;q),
$$
which is equivalent to
$$
\lim_{t \to 1} J_\Lambda\left[\frac{X}{1-t} ;q, t\right] =
q^{|\pmb a|}(q;q)_\Lambda h_\Lambda\left[ \frac{X}{1-q};q\right].
$$
\end{proof}

The previous proposition implies an important symmetry property of the coefficients $K_{\Omega \Lambda}(q,t)$ at $t=1$.
\begin{proposition} \label{propsym}
  The Kostka coefficients $K_{\Omega \Lambda}(q,1)$ are such that
  \begin{equation}
K_{\Omega \Lambda}(q,1) = K_{\sigma(\Omega) \sigma(\Lambda)}(q,1) 
  \end{equation}  
for any $\sigma \in S_m$, where $\sigma(\Lambda)=(\sigma(\pmb a);\lambda)=(a_{\sigma^{-1}(1)},\dots,a_{\sigma^{-1}(m)};\lambda)$.
    \end{proposition}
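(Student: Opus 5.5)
We derive the symmetry from the explicit factorization in Proposition~\ref{propJ1} together with the behaviour of the $m$-symmetric Schur functions under permutations of the first $m$ variables at $t=1$.

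At $t=1$, the Hecke generators \eqref{eqTi} reduce to $T_i=K_{i,i+1}$, the plain transposition of $x_i$ and $x_{i+1}$. Equation \eqref{Tis} then reads $K_{i,i+1}s_\Lambda(x;1)=s_{s_i\Lambda}(x;1)$ in all three cases. When $a_i=a_{i+1}$ we have $s_i\Lambda=\Lambda$, so this case is consistent. When $a_i<a_{i+1}$, the coefficient $(t-1)$ vanishes and $t=1$. Since the $K_{i,i+1}$ generate the action of $S_m$ permuting $x_1,\dots,x_m$, we obtain
$$
\sigma\cdot s_\Omega(x;1)=s_{\sigma(\Omega)}(x;1),
$$
where $\sigma\cdot f(x_1,\dots,x_m,\dots)=f(x_{\sigma^{-1}(1)},\dots,x_{\sigma^{-1}(m)},\dots)$. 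We also need to check that this convention matches $\sigma(\pmb a)=(a_{\sigma^{-1}(1)},\dots,a_{\sigma^{-1}(m)})$. This holds because the transposition $K_{i,i+1}$ sends $x^{\pmb a}$ to $x^{s_i\pmb a}$, and the general case follows by composing transpositions. Here we use that the limit $t\to1$ of $s_\Omega(x;t)$ exists and equals $s_\Omega(x;1)$, so that \eqref{Tis} can be specialized; this is implicit in \eqref{eqdual}.

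Next, Proposition~\ref{propJ1} gives
$$
\lim_{t\to1}J_\Lambda\left[\frac{X}{1-t};q,t\right]=(q;q)_\Lambda \prod_{i=1}^m h_{a_i}\left[x_i+\frac{qX}{1-q}\right] h_\lambda\left[\frac{X}{1-q}\right].
$$
The plethystic alphabet $X$ is symmetric in all variables, so applying $\sigma$ only permutes the factors $h_{a_i}[x_i+\cdots]$ among themselves. The factor $h_{a_i}[x_i+\cdots]$ becomes $h_{a_i}[x_{\sigma(i)}+\cdots]$, so the variable $x_j$ now carries the exponent index $a_{\sigma^{-1}(j)}$. The prefactor $(q;q)_\Lambda$ is invariant under permuting the $a_i$. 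Hence $\sigma$ applied to the limit for $\Lambda$ equals the limit for $\sigma(\Lambda)$.

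Finally, we set $t=1$ in the expansion \eqref{Kostka}. The $K_{\Omega\Lambda}(q,t)$ are defined through an expansion over a basis, and each side has a limit at $t=1$: the right-hand side by the remark above, and the left-hand side by Proposition~\ref{propJ1}. This gives
$$
\lim_{t\to1}J_\Lambda\left[\frac{X}{1-t}\right]=\sum_\Omega K_{\Omega\Lambda}(q,1)\,s_\Omega(x;1).
$$
Applying $\sigma$ to both sides yields
$$
\sum_\Omega K_{\Omega\Lambda}(q,1)\,s_{\sigma(\Omega)}(x;1)=\sum_{\Omega'}K_{\Omega'\sigma(\Lambda)}(q,1)\,s_{\Omega'}(x;1).
$$
The $s_{\Omega'}(x;1)$ are linearly independent, since they are dual to the basis $s^*(x;1)$ by \eqref{eqdual}. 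Comparing coefficients at $\Omega'=\sigma(\Omega)$ gives $K_{\Omega\Lambda}(q,1)=K_{\sigma(\Omega)\sigma(\Lambda)}(q,1)$.

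The main obstacle is justifying the specialization at $t=1$. We must ensure that the coefficients $K_{\Omega\Lambda}(q,t)$ have no poles at $t=1$, so that $K_{\Omega\Lambda}(q,1)$ really is the coefficient in the limiting expansion. We also need $\{s_\Omega(x;1)\}$ to remain a basis. I would argue both via the transition matrix from $s_\Omega(x;t)$ to $k_\Omega$, which is obtained by duality from $s^*_\Lambda(x;t)$. The latter is polynomial in $t$ by Definition~\ref{defdualS}, and its specialization at $t=1$ is invertible by \eqref{eqdual}.
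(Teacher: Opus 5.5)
Your proposal is correct and is essentially the paper's proof. At $t=1$ the $T_i$ become $K_{i,i+1}$, so \eqref{Tis} gives $s_\Omega(x;1)\mapsto s_{\sigma(\Omega)}(x;1)$; you then apply the same permutation of $x_1,\dots,x_m$ to the factorized limit of Proposition~\ref{propJ1} and compare coefficients, and your paragraph on regularity at $t=1$ only makes explicit what the paper takes for granted. One cosmetic slip: your stated convention $\sigma\cdot f=f(x_{\sigma^{-1}(1)},\dots,x_{\sigma^{-1}(m)},\dots)$ actually gives $\sigma\cdot x^{\pmb a}=x^{\sigma^{-1}(\pmb a)}$, whereas your computation on the $h$ side uses the paper's $K_\sigma f=f(x_{\sigma(1)},\dots,x_{\sigma(m)})$; this is harmless, since both sides are relabelled by the same permutation and the claim holds for all $\sigma\in S_m$.
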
  
\begin{proof}
  From \eqref{Kostka} and Proposition~\ref{propJ1}, we have at $t=1$ that
 \begin{equation} \label{eqori}
q^{|\pmb a|}(q;q)_\Lambda h_\Lambda\left[ \frac{X}{1-q};q\right] = 
\sum_\Omega K_{\Omega \Lambda}(q,1) s_\Omega(x;1),
\end{equation}
which implies that
\begin{equation} \label{eqoris}
q^{|\pmb a|}(q;q)_\Lambda h_{\sigma(\Lambda)}\left[ \frac{X}{1-q};q\right] = \sum_\Omega K_{\sigma(\Omega) \sigma(\Lambda)}(q,1) s_{\sigma(\Omega)}(x;1)
\end{equation}
for any $\sigma \in S_m$.
Let $K_\sigma$ be such that $K_\sigma f(x_1,\dots,x_m)=f(x_{\sigma(1)},\dots,x_{\sigma(m)})$ for any $\sigma \in S_m$.  We have
$$
K_\sigma  h_\Lambda\left[\frac{X}{1-q};q \right]=
 h_{a_1}\left[x_{\sigma(1)}+\frac{qX}{1-q} \right]\cdots  h_{a_m}\left[x_{\sigma(m)}+\frac{qX}{1-q} \right]  h_\lambda\left[\frac{X}{1-q}\right] =  h_{\sigma(\Lambda)}\left[\frac{X}{1-q};q \right]
 $$
 and $K_\sigma s_\Omega(x;1)= s_{\sigma(\Omega)}(x;1)$, where the last relation follows from \eqref{Tis} in the case $t=1$ (in which case $T_i=K_{i,i+1}$).
 Applying $K_\sigma$ on both sides of \eqref{eqori} thus yields 
  $$
q^{|\pmb a|}(q;q)_\Lambda h_{\sigma(\Lambda)}\left[ \frac{X}{1-q};q\right] = 
\sum_\Omega K_{\Omega \Lambda}(q,1) s_{\sigma(\Omega)}(x;1).
$$
Comparing the last equation with \eqref{eqoris}, it is immediate that $K_{\Omega \Lambda}(q,1) = K_{\sigma(\Omega) \sigma(\Lambda)}(q,1)$.
\end{proof}

\section{The coefficients $K_{\Omega \Lambda}(q,t)$ at $t=1$.} \label{sect8}

In this section, we will give a combinatorial interpretation for the coefficients  $K_{\Omega \Lambda}(q,1)$.   This combinatorial
interpretation will correspond to the major index statistic
of certain standard tableaux which will be constructed using Jeu de Taquin operations \cite{Fu}.

Let $T$ be a tableau with a hole in it. An inside Jeu de Taquin move consists in moving the hole upward by one unit or leftward by one unit depending on whether the entry above the hole is weakly larger or strictly smaller than the entry to its left.  In doing so, the entry where the hole now lies moves in the original position of the hole.

 Similarly, an outside Jeu de Taquin move consists in moving the hole downward by one unit or rightward by one unit depending on whether the entry below the hole is weakly smaller or strictly larger than the entry to its right.  In doing so, the entry where the hole now lies moves in the original position of the hole.

\begin{definition} \label{defJdt}
  Let $T$ be a (skew) tableau of $n$ letters.  For $a$ and $s$ such that  $1 \leq a<s \leq n$,  and such that there is only one occurrence of the letter $s$, we construct the tableau  $T_{s \to a}$ in the following way:
  \begin{enumerate}
  \item Create a hole in $T$ by deleting the letter $s$.
  \item Increase by one every letter in $T$  from $a$ to $s-1$ (so that the letters now go from $a+1$ to $s$).
  \item Use Jeu de Taquin moves to push the hole inside the letters $a+1$ to $s$. Let the resulting tableau be $T'$.
  \item Put letter $a$ in the position of the hole in $T'$ to form the (skew) tableau $T_{s \to a}$.
  \end{enumerate}  
\end{definition}

\begin{example}
\label{ExampleJdt}
    Let $T= \small{\tableau[scY]{ 1& 2 & 6& 10\\ 3& 8& 9\\ 4& 11& 13 \\ 5 & 12 \\ 7 } } $. We compute $T_{11 \to 2}$ as follows
\begin{enumerate}
    \item Delete the letter $s=11$:
$$
\small{\tableau[scY]{ 1& 2 & 6& 10\\ 3& 8& 9\\ 4& * & 13 \\ 5 & 12 \\ 7 } }
$$
    \item Increase by one the letters from $a=2$ to $s-1=10$:
$$
\small{\tableau[scY]{ 1& 3 & 7& 11\\ 4& 9& 10\\ 5& * & 13 \\ 6 & 12 \\ 8 } }
$$
    \item Use Jeu de Taquin to move the hole inside (but without moving it passed the letters smaller than $a+1=3$):
$$
\small{\tableau[scY]{ 1& 3 & 7& 11\\ 4& 9& 10\\ 5& * & 13 \\ 6 & 12 \\ 8 } } \quad \longrightarrow \quad  \small{\tableau[scY]{ 1& 3 & 7& 11\\ 4& *& 10\\ 5& 9 & 13 \\ 6 & 12 \\ 8 } }  \quad \longrightarrow \quad  T'=\small{\tableau[scY]{ 1& 3 & 7& 11\\ *& 4& 10\\ 5& 9 & 13 \\ 6 & 12 \\ 8 } }
$$
    \item Put the letter $a=2$ in the position of the hole:
$$
T_{11 \to 2}= \small{\tableau[scY]{ 1& 3 & 7& 11\\ 2& 4& 10\\ 5& 9 & 13 \\ 6 & 12 \\ 8 } }
$$    
\end{enumerate}
\end{example}

We can also define the reverse process.
\begin{definition} \label{defJdtrev}
  Let $T$ be a (skew) tableau of $n$ letters.  For $a$ and $s$ such that  $1\leq a < s \leq n$,  and such that there is only one occurrence of the letter $a$, we construct the tableau  $T_{a \to s}$ in the following way:
  \begin{enumerate}
  \item Create a hole in $T$ by deleting the letter $a$.
  \item Decrease by one every letter in $T$  from $a+1$ to $s$ (so that the letters now go from $a$ to $s-1$).
  \item Use Jeu de Taquin to push the hole outside the letters $a$ to $s-1$. Let the resulting tableau be $T'$.
  \item Put letter $s$ in the position of the hole in $T'$ to form the (skew) tableau $T_{a \to s}$.
  \end{enumerate}  
\end{definition}
We will also consider that $T_{a \to a}=T$.
From the elementary properties of the Jeu de Taquin \cite{Fu}, it is immediate that
$(T_{a \to s})_{s \to a}=(T_{s \to a})_{a \to s}=T$.  

\begin{example}
\label{ExampleJdtInv}
    In order to illustrate the reverse process, we start with the tableau $T$ obtained in  step $(4)$ of Example~\ref{ExampleJdt}, and compute $T_{2 \to 11}$. 

\begin{enumerate}
    
    \item Delete the letter $a=2$:
$$
\small{\tableau[scY]{ 1& 3 & 7& 11\\ *& 4& 10\\ 5& 9 & 13 \\ 6 & 12 \\ 8 } }
$$
\item Decrease by 1 every letter from $a+1=3$ to $s=11$:
$$
\small{\tableau[scY]{ 1& 2 & 6& 10\\ *& 3& 9\\ 4& 8 & 13 \\ 5 & 12 \\ 7 } }
$$
\item Use Jeu de Taquin to move the hole outside (but without moving it passed the letters larger than $s-1=10$):
$$
\small{\tableau[scY]{ 1& 2 & 6& 10\\ *& 3& 9\\ 4& 8 & 13 \\ 5 & 12 \\ 7 } } \quad \longrightarrow \quad   \small{\tableau[scY]{ 1& 2 & 6& 10\\ 3& *& 9\\ 4& 8 & 13 \\ 5 & 12 \\ 7 } } \quad \longrightarrow  \quad  T'= \small{\tableau[scY]{ 1& 2 & 6& 10\\ 3& 8& 9\\ 4& * & 13 \\ 5 & 12 \\ 7 } }
$$
\item Put the letter  $s=11$ in the position of the hole:
$$
T_{2 \to 11} = \small{\tableau[scY]{ 1& 2 & 6& 10\\ 3& 8& 9\\ 4& 11 & 13 \\ 5 & 12 \\ 7 } }
$$
\end{enumerate}
Observe that $T_{2 \to 11}$ is the original standard tableau in Example~\ref{ExampleJdt}.
\end{example}

Let $a$ and $b=a+1$ be consecutive letters in a standard tableau $T$.
The relative order between $a$ and $b$ is established in the following way.
We say that
$b$ is southwest of $a$ if $b$ lies in a column weakly to the left of that of $a$. Similarly, we say that  $b$ is northeast of $a$ if $b$ lies in a column strictly to the right of that of $a$.

It will prove important later in this section that the Jeu de Taquin operation
$T_{a \to s}$   
described earlier preserves relative orders.  Observe that $T_{s \to a}$ will also preserve relative orders since it is essentially the inverse of $T_{a \to s}$.  
\begin{lemma} \label{lemmaOrder1} Let $T$ be a standard tableau of size $n$,
  and let  $T' = T_{a \to s}$.
  For any consecutive letters $b$ and $b+1$ such that $a < b < s$, we have
  that the relative order of $b$ and $b+1$ in $T$ is the same as the relative order of $b-1$ and $b$ in $T'$.
  \end{lemma}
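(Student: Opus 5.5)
We follow the hole through the outside slide that defines $T'=T_{a\to s}$. Delete $a$ and relabel the letters $a+1,\dots,s$ as $a,\dots,s-1$, which we call the \emph{active} letters. The outside Jeu de Taquin slide then moves the hole along a path $p_0,p_1,\dots,p_r$ of cells. Here $p_0$ is the original position of $a$, each step goes right or down, and the letter occupying $p_{k+1}$ slides into $p_k$. Every letter that moves is therefore displaced by exactly one unit, either one column to the left (when the hole moves right) or one row up (when the hole moves down). Letters not on the path do not move, and the final hole is filled with $s$.

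The claim concerns an active pair $c=b-1$, $c+1=b$ in the relabelled tableau, with $a\le c<c+1\le s-1$. We must compare the columns of $c$ and $c+1$ before and after the slide. A letter that moves up keeps its column, and a letter that moves left loses exactly one column. Hence the relative order (southwest if ${\rm col}(c+1)\le{\rm col}(c)$, northeast otherwise) can change only when exactly one of the two letters moves left, or when both move left, which preserves it. We split into cases according to which of $c$, $c+1$ move left.

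\emph{Case $c+1$ moves left and $c$ does not.} We show the pair stays northeast, or that the southwest relation was not tight. Suppose ${\rm col}(c+1)\le{\rm col}(c)$ before the slide, and that after the slide ${\rm col}(c+1)-1\le{\rm col}(c)$ still holds. The only danger is ${\rm col}(c+1)={\rm col}(c)+1$ before, which is northeast, and the equality then becomes southwest. So assume $c+1$ sits in column $j+1$ and $c$ in column $j$. Because $c+1$ moves left, the hole passes through the cell $(i,j)$ directly left of $c+1$, where $i$ is the row of $c+1$. Since $c$ is in column $j$ and is the only letter between the entries around it, standardness forces $c$ to lie in row $i-1$ or above, or below row $i$. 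If $c$ is above row $i$, then $c+1$ being northeast is consistent. The hole at $(i,j)$ compares the entry below it with the entry to its right, which is $c+1$, and it chose the right neighbour. So the entry below $(i,j)$ is larger than $c+1$, or there is none. Moreover $(i,j)$ was reached from above or from the left, which we show is impossible if $c$ lies directly above in column $j$. Indeed, the hole came from $(i-1,j)$ only if the entry at $(i,j)$ was $\le$ the entry right of $(i-1,j)$, and chasing the standard inequalities $c<c+1$ in a column-$j$/column-$(j+1)$ configuration gives a contradiction. If $c$ lies below row $i$ in column $j$, then $c$ is below the hole and below the entry $x$ at $(i+1,j)$, and the right move forces $x>c+1$. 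This contradicts $x\le c$ in the column, since $x$ sits above $c$.

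\emph{Case $c$ moves left and $c+1$ does not.} This is symmetric: a southwest pair with ${\rm col}(c+1)={\rm col}(c)$ would become northeast. We rule this out with the same local analysis at the cell left of $c$. The key point is that, because $c$ and $c+1$ are consecutive, the hole cannot pass between them in the forbidden way; any intermediate entry would lie strictly between $c$ and $c+1$.

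The main obstacle is carrying out this local case analysis cleanly. I would first try the standard fact about Jeu de Taquin that the slide path is a lattice path, together with the observation that consecutive letters $c,c+1$ form a horizontal strip or a vertical strip relation. Concretely, we use that $c+1$ is southwest of $c$ if and only if $c+1$ is strictly below $c$. The lemma then reduces to showing that the slide preserves the ``$c+1$ strictly below $c$'' relation. That statement follows from the well-known fact that jeu de taquin preserves descent sets, equivalently the Knuth/dual equivalence invariance of descents, which I would cite from \cite{Fu} if the direct check becomes too messy.
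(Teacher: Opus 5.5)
Your proposal is correct in substance, but what actually closes it is a different route from the paper's. The paper stays local. It asserts that the order of $b,b+1$ can only flip if a vertical domino ($b$ above $b+1$) becomes horizontal or vice versa. It excludes the first flip with one $2\times 2$ check: the hole moving right at the top-left corner forces the entry $c$ below it to satisfy $b<c<b+1$. The reverse flip is left as ``similar''.

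Your two cases attempt the same analysis, less cleanly:
\begin{itemize}
\item In the first case, calling $c$ above row $i$ ``consistent'' has it backwards. If $c$ sits above $(i,j)$ in column $j$ when $c+1$ slides into $(i,j)$, the pair ends southwest, so this is exactly what must be excluded.
\item Your ``hole came from above'' check does exclude $c$ at $(i-1,j)$. This is precisely the horizontal-to-vertical flip the paper omits.
\item You never treat the hole reaching $(i,j)$ from the left. Then $c$ was already at $(i-1,j)$ in $T$, diagonally above-left of $c+1$, which is impossible because the entry at $(i-1,j+1)$ would lie strictly between them.
\item Nor do you treat $c$ in a row $\le i-2$, which standardness of the result excludes.
\item The second case is only asserted by symmetry.
\end{itemize}

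Your final reduction is sound and gives a complete proof once two points are made explicit.

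First, the letters $a+1,\dots,s$ of $T$ form a standard skew tableau whose inner shape has the cell of $a$ as a removable corner. Step (3) of Definition~\ref{defJdtrev} is exactly an ordinary forward slide of this skew tableau: entries below $a$ never sit to the right of or below the hole, and entries above $s$ only act as the outer boundary. The new letter $s$ never belongs to a pair $b-1,b$ with $b\le s-1$.

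Second, the standard statement in \cite{Fu} is that slides preserve the Knuth class of the row word, not descents directly, so you need a short bridge:
\begin{itemize}
\item For consecutive entries, $c+1$ is southwest of $c$ iff it is strictly below $c$. The paper proves one direction at the start of the proof of Lemma~\ref{lemmaOrder}.
\item Being strictly below holds iff $c+1$ precedes $c$ in the row word.
\item An elementary Knuth move on a permutation swaps two adjacent letters $x<z$ next to a letter $y$ with $x<y<z$. Hence $z-x\ge 2$, and the relative position of $c$ and $c+1$ never changes.
\end{itemize}

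The paper's argument is elementary and self-contained. Yours identifies the lemma as an instance of the standard fact that jeu de taquin preserves descent sets, at the price of invoking the Knuth-equivalence theorem for slides.
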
  
\begin{proof}
The relative order between the letters will change only if the relative order of the letters $b$ and $b+1$ changes when performing the Jeu de Taquin in Step (3) of 
Definition~\ref{defJdt}.
Since the Jeu de Taquin can only move the letters one cell to the right or one cell to the left, the only possibility for the relative orders to change is if the Jeu de Taquin changes $$
{\footnotesize \tableau*[mcY]{
b  \cr
b+1
}} \qquad {\rm into} \qquad   {\footnotesize \tableau*[mcY]{
b  
 & b+1
}} 
$$     
or vice versa.  In order for the first case to occur, we would need to have the following situation
$$
{ \footnotesize \tableau*[mcY]{
a & b  \cr
c &  b+1  
}}
\quad \longrightarrow \quad 
{ \footnotesize
\tableau*[mcY]{
\tf & b  \cr
c & b+1  
}
}
\quad \longrightarrow \quad 
{ \footnotesize
\tableau*[mcY]{
b &  \tf  \cr
c & b+1
}}
\quad \longrightarrow\quad 
{ \footnotesize
\tableau*[mcY]{
b & b+1 \cr
c & \tf  
}}
$$
which, by definition of Jeu de Taquin,  leads to the contradiction $b<c<b+1$. The case where the Jeu de Taquin changes
$$
  {\footnotesize \tableau*[mcY]{
b  
 & b+1
}} 
\qquad {\rm into} \qquad
{\footnotesize \tableau*[mcY]{
b  \cr
b+1
}} 
$$     
can be proved in a similar manner.
\end{proof}  

\begin{example}
  Using 
  $$
  T= \small{\tableau[scY]{ 1& 3 & 7& 11\\ 2& 4& 10\\ 5& 9 & 13 \\ 6 & 12 \\ 8 } }
  \qquad
  {\rm and} \qquad    
   T_{2 \to 11} = \small{\tableau[scY]{ 1& 2 & 6& 10\\ 3& 8& 9\\ 4& 11 & 13 \\ 5 & 12 \\ 7 } }
$$ 
    computed in Example~\ref{ExampleJdtInv}, 
    we see that the pair $3,4$ in $T$ has the same relative order as the pair $2,3$ in $T_{2 \to 11}$.
The relative order is also preserved when comparing the pair $a,a+1$ in $T$ with the pair $a-1,a$ in $T_{2 \to 11}$ for $a=4,\dots, 10$.
\end{example}

\begin{lemma}  \label{lemmaOrder} Let $T$ be a standard tableau of size $n$.  For any consecutive letters
  $a$ and $a+1$  and any $a+1 < s \leq n$, we have that
  the relative order of the letters  $a$ and $a+1$ in $T$ is the same as the relative order of the letters $s-1$ and $s$ in the tableau $T'$, where
  $$
  T' =  (T_{a+1 \to s})_{a \to s-1}.
    $$
\end{lemma}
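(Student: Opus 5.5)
We need to show that the relative order of $a,a+1$ in $T$ equals that of $s-1,s$ in $T'=(T_{a+1\to s})_{a\to s-1}$. The plan is to track the two letters through the two Jeu de Taquin operations. We would not try to follow them move by move; instead we would reduce to a statement about where the two moved letters end up relative to each other.

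Set $U=T_{a+1\to s}$. In forming $U$, the letter $a+1$ is removed and the letters $a+2,\dots,s$ become $a+1,\dots,s-1$. The hole is slid outward through these letters and $s$ is placed at its final position. The letter $a$ is untouched by this operation, since it is smaller than every letter involved. So in $U$ the cell of $a$ is the same as in $T$, and $s$ sits at the end of the outward slide that started at the original cell of $a+1$. Next, $T'=U_{a\to s-1}$ removes $a$, decreases $a+1,\dots,s-1$ by one, and slides the hole outward through letters that are now at most $s-2$. The letter $s$ is not touched, so it stays put, and $s-1$ is placed at the end of the second slide path. Hence the claim reduces to the following: the end cell $c_2$ of the second slide, which starts at the cell of $a$, is weakly left of the end cell $c_1$ of the first slide, which starts at the cell of $a+1$, exactly when $a+1$ is weakly left of $a$ in $T$.

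To prove this, we would use the standard comparison of two successive slide paths, as in the row-bumping/Jeu de Taquin lemma in Fulton \cite{Fu}. Consider the two outward slides, both restricted to the region of letters $a+1,\dots,s$ of $T$. Their starting cells $x=\mathrm{cell}(a)$ and $y=\mathrm{cell}(a+1)$ are adjacent in reading order. In the first case, $a+1$ is northeast of $a$, so $y$ is strictly right of and weakly above $x$. Then the second path stays strictly below or to the left of the first path. Concretely, we would show by induction on the steps that the second hole is never weakly northeast of the first path, using the same local argument as in Lemma~\ref{lemmaOrder1}. It follows that $c_2$ is strictly left and weakly below $c_1$, but that gives $s-1$ southwest of $s$, which reverses the order.

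That sign is wrong, so the endpoints need rechecking. In the first slide, after the outward moves the letters of $U$ at the path cells shift, and the second slide then runs on $U$. We expect the correct comparison to be this: the later slide, starting at $x$, which lies weakly left of or below $y$ in the northeast case, ends strictly northeast of the first endpoint. This is the familiar Fulton fact that a second outward slide starting "before" the first ends strictly to its right. We would establish it carefully in both cases, namely $y$ northeast of $x$ versus $y$ strictly below $x$ in a weakly left column, by a step-by-step comparison of the two paths. We would check it against small examples, for instance a two-row tableau.

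The main obstacle is this path-comparison step. The inductive invariant must be stated precisely, say "at each stage the second hole is strictly left or weakly above the corresponding portion of the first path". We also have to handle the boundary behaviour where one path stops early because it reaches letters larger than $s-1$ (respectively $s-2$), which the restrictions in Definition~\ref{defJdtrev} impose. Once the endpoint comparison is established in both cases, the lemma follows, because the relative order of $s-1$ and $s$ in $T'$ is read off directly from $c_2$ and $c_1$.
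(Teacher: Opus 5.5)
There is a genuine gap: a sign error in your reduction leads you to discard a correct conclusion and adopt a false one. The letter $s$ sits at $c_1$ and $s-1$ at $c_2$, and the relative order of the pair $s-1,s$ is the position of $s$ relative to $s-1$. So the lemma amounts to: $c_1$ is weakly left of $c_2$ exactly when $y=\mathrm{cell}(a+1)$ is weakly left of $x=\mathrm{cell}(a)$. Equivalently, in the northeast case $c_2$ must be strictly left of $c_1$. Your second paragraph states this with $c_1$ and $c_2$ swapped. As a result, your first conclusion in the third paragraph ($c_2$ strictly left of and weakly below $c_1$) was exactly right: it says $s$ is northeast of $s-1$, matching $a+1$ northeast of $a$. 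The ``corrected'' expectation you adopt instead, that the second slide ends strictly northeast of the first endpoint, is false. Take $T$ to be the single row $1\,2\,3$ with $a=1$, $s=3$. Then $U=T_{2\to 3}=T$ and $T'=U_{1\to 2}=T$, with $c_1=(1,3)$ and $c_2=(1,2)$. So the plan as written sets out to prove a false statement. The ``familiar Fulton fact'' is misremembered: the second outward slide, starting strictly left of and weakly below the first, ends strictly left of and weakly below the first endpoint.

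Even with the direction fixed, the actual content, the path comparison, is only announced, and the invariants you propose are not correct as stated. The second path can share cells with the first: in the example above, the hole of $a$ moves into the starting cell of the first path. So ``never weakly northeast of the first path'' fails, and ``strictly left or weakly above'' puts the second path on the wrong side. What the paper proves in the northeast case is that the second path $P_a$ may run along horizontal segments of the first path $P_b$ but can never touch a vertical one. The local check also handles the fact that the two slides run on different tableaux with shifted labels. Suppose $P_b$ passes down through the cells that held $c_1$ over $c_2$ in $T$, with $b_1$ over $b_2$ to their left. When the hole of $a$ reaches the cell of $b_1$ in $U$, it compares $\bar b_2=b_2-1$ below with $\bar c_2=c_2-1$ to the right, and $b_2<c_2$ forces a downward move. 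Also, $P_a$ cannot end southeast of the endpoint of $P_b$, since those endpoints carry $s-1<s$ in the standard tableau $T'$. Together these give $s$ strictly right of $s-1$. The southwest case is the conjugate argument with horizontal segments.
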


\begin{proof}
  Let $a$ and $b=a+1$ be in positions $(i,j)$ and $(k, \ell)$ respectively. We observe  that if $k>i$ then $\ell \leq j$ since there would otherwise need to be an
  entry $c$ such that $a<c<b$ in position  $(k, j)$, which is impossible given that
  $a$ and $b=a+1$ are consecutive.
$$
{\footnotesize \tableau*[mcY]{
a & \ldots & c \cr
\vdots & \ddots & \vdots \cr
d & \ldots  & b
}}
$$   
Hence, if $b$ is strictly to the right of $a$ then it must lie in the row of $a$ or in a higher one.

When going from $T$ to $T_{b \to s}$, the hole in the position of the
 letter $b$ will follow a path when pushed outside using Jeu de Taquin.
Let $P_{b}$ be this path. Similarly, we will let $P_a$ be the path followed by the hole in the position of the letter $a$ when going from 
 $T_{b \to s}$ to $T'=(T_{b \to s})_{a \to s-1}$. Observe that in $T'$, the ending points of $P_a$ and $P_b$ are occupied respectively by the letters $s-1$ and $s$. Hence, $P_a$ cannot end southeast of $P_b$.

We will consider separately the two following cases:

\begin{enumerate}
    \item $b$ lies in a column strictly to the right of the column in which  $a$ lies ($b$ is northeast of $a$).

    \item $b$ lies in a column weakly to the left of the column in which $a$ lies ($b$ is southwest of $a$).
\end{enumerate}

We first consider Case (1).
In this case, given that $P_a$ starts weakly below $P_b$ and cannot end southeast of $P_b$, the relative order of the letters can only change if $P_{a}$ goes through a vertical segment of $P_b$.  We will now see that this is impossible, as illustrated in Figure~\ref{fig:NoV}, given that $P_a$ cannot even touch $P_b$ on a vertical segment.
\begin{centering}
\begin{figure}
\resizebox{5.5cm}{3.5cm}{
  \begin{tikzpicture} 
\node[thick, font=\fontsize{14}{0}\selectfont, thick] at (1.25,6.7)  {$\color{red}{ P_{b} } $};
\node[thick, font=\fontsize{14}{0}\selectfont, thick] at (0.25,5.7){$\large \color{blue}{P_{a}}$};
 \draw[step=1cm, color=gray] (0, 0) grid (11, 7);
\draw[line width=3pt, red] (1.5,6.5) -- (3.5,6.5) -- (3.5,4.5) -- (5.5,4.5) -- (5.5,3.5) -- (8.5,3.5) -- (8.5, 2.5) -- (10.5, 2.5) -- (10.5, 0.5);
\draw[line width=3pt, blue] (0.5,5.5) -- (2.5,5.5) -- (2.5,4.3) -- (4.5,4.3) -- (4.5,3.3) -- (7.5,3.3) -- (7.5, 2.3) -- (9.5, 2.3) -- (9.5, 0.5);
\end{tikzpicture}
}
\caption{$P_{a}$ cannot touch $P_{b}$ on a vertical segment.}
\label{fig:NoV}
\end{figure}
\end{centering}

Suppose that $P_{a}$ approaches a vertical segment of $P_{b}$.
Let $c_{1}, c_{2}$ be the labels in $T$ of the squares occupied by the vertical segment.  After the vertical Jeu de Taquin, the squares become respectively $c_2$ and $c_3$. We then use an overline to denote their values in $T_{b \to s}$,
with $\bar c_2=c_2-1$, $\bar c_3=c_3-1$ or $\bar c_3=s$, $\bar b_1= b_1-1$ or
$\bar b_1=b_1$ (depending on whether $b < b_1 < s$ or not), and
$\bar b_2= b_2-1$ or
$\bar b_2=b_2$.
$$
{ \footnotesize \tableau*[mcY]{
b_{1} & c_{1}  \cr
b_{2} & c_{2}  
}}
\longrightarrow
{ \footnotesize
\tableau*[mcY]{
b_{1} & c_{2}  \cr
b_{2} & c_{3}  
}
}
\longrightarrow
{ \footnotesize
\tableau*[mcY]{
\overline{b}_{1} & \overline{c}_{2}  \cr
\overline{b}_{2} & \overline{c}_{3}  
}}
$$

When, in computing $T'=(T_{b \to s})_{a \to s-1}$, the hole corresponding to $a$
(represented by an $a$ in the following diagram)  arrives in the position in which ${b}_{1}$ is located, we have that $a < b_1 <b_2 <c_2 \leq s$, which implies that
$\bar b_1=b_1-1$ and $\bar b_2 = b_2-1$. Therefore, 
we have that $\overline{b}_{2} = b_{2} - 1 < c_{2} -1 = \overline{c}_{2}$ and the Jeu de Taquin move is the following: 
$$
{ \footnotesize\tableau*[mcY]{
a & \overline{c}_{2}  \cr
\overline{b}_{2} & \overline{c}_{3}  
}}
\to
{ \footnotesize \tableau*[mcY]{
\overline{b}_{2} & \overline{c}_{2}  \cr
a & \overline{c}_{3}  
}}
$$
We have thus shown that the path $P_{a}$ cannot touch the path $P_b$ on a vertical segment, which implies that in $T'$ the letter 
$s$ lies in a column strictly to the right of the letter $s-1$.

We now consider Case (2) in which $b=a+1$ lies in column weakly to the left of that of $a$. This case is similar to the previous one, with the path $P_{b}$ never going through a horizontal segment of the path $P_{a}$.
Geometrically we can think of this case as the conjugate of the previous one, which we illustrate in Figure~\ref{fig:NoH}. We omit the proof  which is exactly as in the previous case.

\begin{centering}
\begin{figure}
\resizebox{3.5cm}{5cm}{
\begin{tikzpicture}
\node[thick, font=\fontsize{14}{0}\selectfont, thick] at (0.25,9.75)  {$\color{red}{ P_{a} } $};
\node[thick, font=\fontsize{14}{0}\selectfont, thick] at (1.25,9.75){$\large \color{blue}{P_{b}}$};
 \draw[step=1cm, color=gray] (0, 0) grid (7, 10);
\draw[line width=3pt, red] (0.5,9.5) -- (0.5,7.5) -- (2.5,7.5) -- (2.5,5.5) -- (3.5,5.5) -- (3.5,2.5) -- (4.5, 2.5) -- (4.5, 0.5) -- (5.5, 0.5);
\draw[line width=3pt, blue] (1.5,9.5) -- (1.5,8.5) -- (2.7,8.5) -- (2.7,6.5) -- (3.7,6.5) -- (3.7,3.5) -- (4.7, 3.5) -- (4.7, 2.5) -- (4.7,1.5) -- (6.7, 1.5);
\end{tikzpicture}
}
\caption{$P_{b}$ cannot touch $P_{a}$ on a horizontal segment.}
\label{fig:NoH}
\end{figure}
\end{centering}
\end{proof}

\begin{example}
Consider the tableau $(T_{3 \to 15})_{2 \to 14}$
obtained from the following tableau $T$:
\begin{equation}
T={\small{\tableau[scY]{ 1& \color{red}{3}& 5& 6& 11\\2& \color{red}{4}& \color{red}{7}& \color{red}{9} \\8& 10& 13& \color{red}{15}\\12& 14 } } }
\quad \longrightarrow \quad T_{3 \to 15}=
{\small{\tableau[scY]{ 1& 3& 4& 5& 10\\ \color{blue}{2}& \color{blue}{6}& \color{blue}{8}& 14\\7& 9& \color{blue}{12} & 15\\11& 13 } } }
\quad \longrightarrow \quad   T'=(T_{3 \to 15})_{2 \to 14}=
{\small{\tableau[scY]{ 1&2&3&4&9\\ 5&7&11&13 \\ 6&8&14&15 \\ 10&12 } } }
\end{equation}
where the paths $P_3$ and $P_2$ are indicated in red and blue respectively. Note that the relative order of the letters $2$ and $3$ in $T$ is the same as the relative order of the letters $14$ and $15$ in  $(T_{3 \to 15})_{2 \to 14}$.

\end{example}

\begin{definition} Let $\alpha=(\alpha_1,\dots,\alpha_\ell)$ be a vector of nonnegative integers, and let $T$ be a standard tableau of size $|\alpha|$.  
 For $i \in \{1,\dots,\ell\}$ and $c=\alpha_1+\cdots+\alpha_{i-1}+1$, we let 
  $$
  {\rm maj}_{\alpha,i}(T) = \sum_{j : _{c+j}\swarrow^{c+j-1}} j ,
$$
  where the sum is over all $j \in \{1,\dots,\alpha_i-1 \}$ such that the letter $c+j$ is southwest of the letter $c+j-1$ in $T$.
    We then define the major index of $T$ relative to $\alpha$ as
  $$
 {\rm maj}_{\alpha}(T) = \sum_{i=1}^\ell  {\rm maj}_{\alpha,i}(T).
  $$
\end{definition}  

\begin{example}  Let $\alpha = (3,1,4,5,2)$ and suppose that 
$$
T=
{\small{\tableau[scY]{ 1& 2 & 5 & 10 & 15 \\ 3&6&8&11\\ 4&9&12&14 \\ 7&13 
}}}
$$
To $\alpha$ corresponds the following auxiliary diagram whose row lengths are the entries of $\alpha$:
$$
{\small{\tableau[scY]{1^{\color{red}{0}}&2^{\color{red}{1}}&3^{\color{red}{2}} \\ 4^{\color{red}{0}} \\ 5^{\color{red}{0}}&6^{\color{red}{1}}&7^{\color{red}{2}}&8^{\color{red}{3}} \\ 9^{\color{red}{0}}&10^{\color{red}{1}}&11^{\color{red}{2}}&12^{\color{red}{3}}&13^{\color{red}{4}}\\ 14^{\color{red}{0}}&15^{\color{red}{1}} 
}}}
$$
If $a$ and $a+1$ are in row $i$ of the auxiliary diagram, and $a+1$ is southwest of  $a$ in $T$, then we add the superscript of $a+1$ to ${\rm maj}_{\alpha,i}(T)$. We thus have
$$
{\rm maj}_{\alpha,1}(T)=2 , \quad  {\rm maj}_{\alpha,2}(T)=0,\quad
 {\rm maj}_{\alpha,3}(T)=1+2,\quad  {\rm maj}_{\alpha,4}(T)=2+3+4,\quad  {\rm maj}_{\alpha,5}(T)=0  
$$
which implies that $ {\rm maj}_{\alpha}(T)=2+0+3+9+0=14$.
\end{example}

Let $T$ be a standard tableau in $n$ letters. For  an interval $[a,b]$,
with $1 \leq a <b < n$ and an $s >  b$, we will let
$$
T_{[a,b] \to s} = \bigl(\cdots (T_{b \to s})_{b-1 \to s-1} \cdots\bigr)_{a \to s+a-b}.
$$

Given an $m$-partition $\Omega$ such that $|\Omega|=n$, we will say that $T$ is a standard filling of $\Omega$ if the cells of $\Omega$ (not including the circles) are filled with the integers $1,\dots,n$ in a standard way.  For instance, 
a possible standard filling of  $\Omega=(2,0,1; 3,2 )$ is 
 $$
        {\tableau[scY]{ 1&2 &4 & \bl \\ 3 &6  & \bl \cercle{1} \\5 & 7& \bl  \\ 8 & \bl \cercle{3} \\ \bl \cercle{2} }}
$$

\begin{definition} \label{defTa}
 Let $T$ be  a standard filling of the $m$-partition $\Omega$, with
  $n=|\Omega|$. We first
 let $T_\circ$ be the tableau obtained from $T$ by changing, for $i$ from 1 to $m$, the $i$-circle into a cell filled with the letter $n+i$. For $\pmb a =(a_1,\dots,a_m)$ such that $|\pmb a| \leq n$, we will let $T_{\pmb a}$ be the standard tableau obtained from $T_\circ$ in the following way.
  Let  $T^{(m+1)}=T_\circ$, and then define recursively
  $$
  T^{(i)}=T^{(i+1)}_{[n+1-a_{i}-\cdots- a_m,n-a_{i+1}-\cdots -a_m] \to  n+i-1-a_{i+1}-\cdots -a_m}
  $$
 for $i=m,\dots,1$.
 We then let $T_{\pmb a}=T^{(1)}$ (which is also equal to $T^{(2)}$ by construction).  If the letters in
 $T_\circ$ are divided into the blocks $(n-|\pmb a|,a_1,\dots,a_m,1^m)$ , then those in $T^{(m)}$ are divided into the blocks $(n-|\pmb a|,a_1,\dots,a_{m-1},1^{m-1},a_m,1)$ (the letters corresponding to $a_m$ have been shifted by $m-1)$. Using this process again and again, we get that the letters in $T_{\pmb a}$
  are divided into the blocks $(n-|\pmb a|,a_1,1,a_2,1,\dots,a_m,1)$.
\end{definition}  

\begin{example} \label{ex27} We will compute $T_{\pmb a}$ for $\pmb a=(2,1,3)$ and 
$$
T ={\small{\tableau[scY]{ 1&2&4&8\\ 3&5&6 &\bl \tcercle{\rm{1}}\\ 7 & 9 & \bl \tcercle{\rm{2}} \\ \bl \tcercle{\rm{3}}
}}}
$$
First, after replacing the 1-circle, 2-circle and 3-circle by cells filled respectively by the letters $10,11$ and $12$, we obtain
$$
 T_{\circ}= T^{(4)} ={\small{\tableau[scY]{ 1&2&4&8\\ 3&5&6 & 10 \\ 7 & 9 & 11 \\ 12
}}}
$$
whose blocks are $(\mathbf{3},2,1,3, \mathbf{1}, \mathbf{1}, \mathbf{1})$ with the $\mathbf{1}$'s stemming from the circles and the $\mathbf{3}$ corresponding to $n-|\pmb a|=3$. In order to go from $(\mathbf{3},2,1,3, \mathbf{1}, \mathbf{1}, \mathbf{1})$ to  $(\mathbf{3},2,1, \mathbf{1}, \mathbf{1}, 3, \mathbf{1})$, we need to compute $T^{(4)}_{[7,9] \to 11}$:
$$
 T^{(4)}_{9\to 11} = {\small{\tableau[scY]{ 1&2&4&8\\ 3&5&6 & 9 \\ 7 & 10 & 11 \\ 12}}} \quad \longrightarrow \quad T^{(4)}_{[8,9] \to 11} = {\small{\tableau[scY]{ 1&2&4&8\\ 3&5&6 & 10 \\ 7 & 9 & 11 \\ 12}}} \quad \longrightarrow \quad T^{(3)} = T^{(4)}_{[7,9] \to 11} = {\small{\tableau[scY]{ 1&2&4&6\\ 3&5&7 & 10 \\ 8 & 9 & 11 \\ 12}}} 
$$
 Then, to go from $(\mathbf{3},2,1, \mathbf{1}, \mathbf{1}, 3, \mathbf{1})$ to $(\mathbf{3},2, \mathbf{1},1, \mathbf{1}, 3, \mathbf{1})$ we need to compute $T^{(3)}_{[6,6] \to 7}$.
$$
T^{(2)} = T^{(3)}_{[6,6] \to 7} = T^{(3)}_{6 \to 7}  = {\small{\tableau[scY]{ 1&2&4&7\\ 3&5&6 & 10 \\ 8 & 9 & 11 \\ 12}}} 
$$
Finally, as we don't need to move $a_{1}=2$ since it already has a $\mathbf{1}$  to its right, we have
$$
T_{\pmb a} = T^{(1)} =  T^{(2)}
$$
\end{example}

We will now provide a combinatorial interpretation for the expansion of the $m$-symmetric Macdonald polynomials in terms of $m$-symmetric Schur functions.

\begin{theorem} \label{theo}
  Let $\Lambda=(\pmb a;\lambda)$ be an arbitrary $m$-partition.
The $(q,t)$-Kostka coefficients at $t=1$ in 
  $$
\lim_{t \to 1 } J_\Lambda \left [ \frac{X}{1-t};q,t\right]=  \sum_{\Omega} K_{\Omega \Lambda}(q,1) s_{\Omega}(x;1)
$$
have the following combinatorial interpretation.  If 
$\Omega$ is dominant, then
\begin{equation} \label{kostkadom}
K_{\Omega \Lambda}(q,1)=\sum_{T: \,{\rm sh}(T)=\Omega} q^{{\rm maj}_{\tilde \Lambda}(T_{\pmb a})}  ,
\end{equation}
where the sum is over all standard fillings of the $m$-partition  $\Omega$,
where
$$
\tilde \Lambda=(\lambda_1,\dots,\lambda_\ell,a_1+1,\dots,a_m+1),
$$
and where $T_{\pmb a}$ was introduced in Definition~\ref{defTa}.

If $\Omega$ is not dominant, then $\sigma(\Omega)$ is dominant for some $\sigma \in S_m$. In this case, using Proposition~\ref{propsym}, we can compute
$K_{\Omega \Lambda}(q,1)=K_{\sigma(\Omega) \sigma(\Lambda)}(q,t)$ using \eqref{kostkadom}.

The combinatorial interpretation for $K_{\Omega \Lambda}(q,1)$ implies in particular that, when $q=t=1$, we have 
$$
K_{\Omega \Lambda}(1,1) = \# \{ {\rm standard~fillings~of~} \Omega \}=  \# \{ {\rm standard~tableaux~of~shape~} \pmb b \cup \mu \} .
$$
\end{theorem}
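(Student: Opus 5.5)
We start from Proposition~\ref{propJ1}, which writes the $t\to1$ limit as
\[
(q;q)_\Lambda\prod_{i=1}^m h_{a_i}\Bigl[x_i+\tfrac{qX}{1-q}\Bigr]\, h_\lambda\Bigl[\tfrac{X}{1-q}\Bigr].
\]
The plan is to expand this product in the basis $k_\Omega(x)=x^{\pmb b}s_\mu(x)$ and then convert to $m$-symmetric Schur functions via \eqref{eqkins}.

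\textbf{Step 1: expansion into products of Schur functions.} For each factor we use
\[
(q;q)_{a_i}\,h_{a_i}\Bigl[x_i+\tfrac{qX}{1-q}\Bigr]=\sum_{k=0}^{a_i} x_i^{a_i-k}\,q^k\,(q;q)_{a_i}\,h_k\Bigl[\tfrac{X}{1-q}\Bigr].
\]
In the usual case, $(q;q)_r h_r[X/(1-q)]=\sum_{|\nu|=r}\sum_{P}q^{{\rm maj}(P)}s_\nu$, summed over standard tableaux $P$ of shape $\nu$. We get the analogue for a block of length $k\le a_i$ by writing $(q;q)_{a_i}/(q;q)_k$ as a product of factors $(1-q^{j})$, $k<j\le a_i$. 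A cleaner route is to read the factor $x_i^{a_i-k}q^k$ together with the circle $i$ as one block of size $a_i+1$ in $\tilde\Lambda$. The identity
\[
(q;q)_{a}\,h_a\Bigl[x+\tfrac{qX}{1-q}\Bigr]=\bigl((q;q)_{a+1}\,h_{a+1}[\,\cdot\,]\ \text{with one letter specialized to }x\bigr)/(1-q^{a+1})\cdot(\dots)
\]
suggests exactly this, and it is why the blocks of $\tilde\Lambda$ are $a_i+1$. Multiplying everything out and applying the ordinary Pieri/Littlewood--Richardson rule in terms of standard tableaux, we will show that the coefficient of $k_\Omega$, after summing over $k$, equals the $q$-count of standard tableaux of shape $\mu\cup$(all letters) with ${\rm maj}$ taken relative to blocks $(\lambda,a_1+1,\dots,a_m+1)$, with the extra letter of block $i$ forced last. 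This is the usual $t=1$ argument \eqref{usualcase}, run with $\tilde\Lambda$.

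\textbf{Step 2: passage to $s_\Omega(x;1)$.} By \eqref{eqkins}, $k_{\Omega'}=\sum_\Omega D_{\Omega\Omega'}s_\Omega(x;1)$, and for dominant $\Omega$ Proposition~\ref{dualSD} gives $D_{\Omega\Omega'}=\#\setSD_{\Omega\Omega'}$. Lemma~\ref{lemmaskew} lets us glue a tableau of $\setSD_{\Omega\Omega'}$, with its circles converted into cells, onto a standard tableau of shape $\mu'$. This identifies the combined objects with standard fillings $T$ of $\Omega$, augmented to $T_\circ$, in which the letters $n+1,\dots,n+m$ play the role of the circles. In this picture the letters of the $x_i$-part must sit adjacent to the circle-letter $n+i$.

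\textbf{Step 3: the bijection via $T_{\pmb a}$.} The blocks of $T_\circ$ are $(n-|\pmb a|,a_1,\dots,a_m,1^m)$. Definition~\ref{defTa} uses repeated Jeu de Taquin moves $T_{[\cdot]\to\cdot}$ to rearrange them into $(n-|\pmb a|,a_1,1,\dots,a_m,1)$, so that each $a_i$-block sits right before its circle letter; this is exactly the block structure of $\tilde\Lambda$ up to ordering. Lemma~\ref{lemmaOrder1} and Lemma~\ref{lemmaOrder} show that these moves preserve the relative (southwest/northeast) order within each block. Hence ${\rm maj}_{\tilde\Lambda}(T_{\pmb a})$ records the descents computed inside $T_\circ$, and $T\mapsto T_{\pmb a}$ is a bijection onto its image, since each move is invertible. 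Summing gives \eqref{kostkadom}. The non-dominant case follows from Proposition~\ref{propsym}, and setting $q=1$ counts standard fillings, which are in bijection with standard tableaux of shape $\pmb b\cup\mu$.

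The main obstacle is Step~3 together with the end of Step~1. We must verify that the descent statistic generated by the $q$-expansion, in which each block $i$ carries its circle letter at the end, matches ${\rm maj}_{\tilde\Lambda}$ on $T_{\pmb a}$ exactly, including the descent between the last $a_i$-letter and the circle letter. We must also check that the flag constraint of $\setSD$ (letter $i$ only in the first $a_i$ columns) corresponds precisely to the tableaux reachable by the Jeu de Taquin transport. Here we would lean on Lemma~\ref{lemmaOrder} to control the pair formed by the last $a_i$-letter and the circle letter.
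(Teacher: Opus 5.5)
\textbf{There is a genuine gap.} Your bookkeeping matches the paper's: Proposition~\ref{propJ1}, the expansion $h_{a_i}[x_i+\frac{qX}{1-q}]=\sum_{c_i}q^{a_i-c_i}x_i^{c_i}h_{a_i-c_i}[\frac{X}{1-q}]$, \eqref{eqkins} with Proposition~\ref{dualSD}, gluing via Lemma~\ref{lemmaskew}, and Jeu de Taquin transport with Lemma~\ref{lemmaOrder}. But the step that actually produces a positive formula is missing, and what you put in its place is false.

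In Step~1 you claim that the coefficient of $k_\Omega$, ``after summing over $k$'', is already a $q$-count of tableaux. There is no sum over $k$ inside a single coefficient, because $\Gamma=(\pmb c;\nu)$ fixes $\pmb c$. Moreover, these coefficients carry the factors $q^{a_i-c_i}(q;q)_{a_i}/(q;q)_{a_i-c_i}$, which are not positive. Already for $\Lambda=(1;\emptyset)$ the limit is $(1-q)\,k_{(1;\emptyset)}+q\,k_{(0;1)}$. Positivity appears only after passing to $s_\Omega(x;1)$: there $k_{(1;\emptyset)}=s_{(1;\emptyset)}$ and $k_{(0;1)}=s_{(1;\emptyset)}+s_{(0;1)}$, so the $1-q$ and the $q$ coming from \emph{different} $\pmb c$ combine into $1$. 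For the same reason Step~3 cannot be a bijection argument: signed weights spread over different $\pmb c$ cannot be matched term by term with monomials $q^{{\rm maj}_{\tilde\Lambda}(T_{\pmb a})}$. Note also that the glued objects of Step~2 are standard fillings of $\Omega$ only when $\pmb c=0$. Finally, the displayed ``identity'' in Step~1 is not an identity, and appealing to the argument behind \eqref{usualcase} does not supply the missing cancellation.

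\textbf{The missing idea is the paper's collapse over $c_m$.}
\begin{enumerate}
\item Form $T_{P,Q}$, numbering the $c_i+1$ letters $i$ of $Q_\circ$ (circle included) from left to right, so they form blocks $1^{c_i+1}$.
\item Transport the $(a_m-c_m)$-block of $P$ with $T_{[\cdot]\to\cdot}$ so that it sits just before the $c_m+1$ letters coming from $m$. Lemma~\ref{lemmaOrder} keeps maj unchanged.
\item Fix the resulting tableau $T'$. Among its last $a_m+1$ letters, let $s+1,\dots,a_m+1$ be the maximal final run going left to right. Then $T'$ arises from exactly one pair for each $0\le c_m\le a_m-s$, and from none otherwise.
\item Compared with maj for the merged block of size $a_m+1$, the only change is the contribution $s$ of the letter $s+1$, which is lost exactly when $c_m=a_m-s$.
\end{enumerate}
So the weighted sum over $c_m$ equals $q^{{\rm maj}}$ of $T'$ for the merged block precisely because
\[
\frac{(q;q)_{a}}{(q;q)_{s}}+\sum_{i=s+1}^{a}q^{i}\,\frac{(q;q)_{a}}{(q;q)_{i}}=1 \qquad (a=a_m).
\]
Iterating for $m-1,\dots,1$ removes every $c_i$, leaves $Q$ empty, and produces $T_{\pmb a}$ from a standard filling of $\Omega$.

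This also shows why your plan to control the pair (last letter of the $a_i$-block, circle letter) with Lemma~\ref{lemmaOrder} cannot work. That lemma only preserves the relative order of consecutive letters that are transported together. The boundary pair straddles a block boundary, so nothing fixes whether it is a descent. Whether it is a descent is exactly what separates the term $c_i=a_i-s$ from the terms $c_i<a_i-s$, and it is this discrepancy that the $q$-identity absorbs.
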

We first illustrate Theorem~\ref{theo} before proceeding to its proof.
\begin{example}
    Let $\Lambda = (2,1,3;3)$, and $T$ be as in Example~\ref{ex27}. We have that ${\rm sh}(T)=(3,2,0;4)=\Omega$ is dominant. The contribution of $T$ to $K_{\Omega \Lambda}(q,1)$ is thus $q^{{\rm maj}_{\tilde \Lambda}(T_{\pmb a})}$, where
    $\pmb a=(2,1,3)$ and $\tilde \Lambda=(3,3,2,4)$. In Example~\ref{ex27}, we obtained
    $$
T_{\pmb a}  =  {\small{\tableau[scY]{ 1&2&4&7\\ 3&5&6 & 10 \\ 8 & 9 & 11 \\ 12}}} 
    $$
    Using the auxiliary diagram    
$$
{\small{\tableau[scY]{1^{\color{red}{0}}&2^{\color{red}{1}}&3^{\color{red}{2}}\\ 4^{\color{red}{0}} & 5^{\color{red}{1}} & 6^{\color{red}{2}} \\ 7^{\color{red}{0}}&8^{\color{red}{1}}  \\  9^{\color{red}{0}} & {\tiny 10^{\color{red}{1}}} & 11^{\color{red}{2}} & 12^{\color{red}{3}}
}}}
$$
we get that ${{\rm maj}_{\tilde \Lambda}(T_{\pmb a})}=2+1+1+2+3=9$ with contributions from the letters
$3,5,8,11,12$.
\end{example}

\begin{example}
    Let $\Lambda=(2,1,3;2,1)$ and $\Omega = (2,0,3;4)$.
In this case, $\Omega$ is not dominant.  Using the permutation $\sigma=[2,3,1]$, we have that $\sigma(\Omega)=(3,2,0;4)$ is now dominant. Hence,
$$
K_{\Omega \Lambda}(q;1)=K_{\sigma(\Omega) \sigma(\Lambda)}(q;1) = K_{(3,2,0;4) (3,2,1;2,1)}(q,1)
$$
Note that in this case $\Lambda'=\sigma(\Lambda)=(3,2,1;2,1)$ is dominant even though it does not have to be necessarily this way.
One of the  tableaux contributing to $K_{(3,2,0;4) (3,2,1;1)}(q,1)$ is
$$
T = {\small{\tableau[scY]{ 1&2&4&8\\ 3&5&6 &\bl \tcercle{\rm{1}}\\ 7 & 9 & \bl \tcercle{\rm{2}} \\ \bl \tcercle{\rm{3}}
}}}
$$
and its contribution will be $q^{{\rm maj}_{\tilde \Lambda'}(T_{\pmb a})}$, where
    $\pmb a=(3,2,1)$ and $\tilde \Lambda'=(2,1,4,3,2)$.
We first compute $T_{\pmb a}$. We start with
$$
T^{(4)}=T_{\circ} = {\small{\tableau[scY]{ 1&2&4&8\\ 3&5&6 & 10 \\ 7 & 9 & 11 \\ 12}}}
$$
whose blocks are $({\bf 3},3,2,1, \pmb 1, \pmb 1, \pmb 1)$. We then obtain $T^{(3)}=T^{(4)}_{9 \to 11}$ as follows:
$$
T^{(4)}={\small{\tableau[scY]{ 1&2&4&8\\ 3&5&6 & 10 \\ 7 & 9 & 11 \\ 12}}} \quad {\longrightarrow}\quad  T^{(4)}_{9\to 11}=
{\small{\tableau[scY]{ 1&2&4&8\\ 3&5&6 & 9 \\ 7 & 10 & 11 \\ 12}}} 
$$
whose blocks are now $({\pmb 3},3,2, \pmb 1, \pmb 1, 1, \pmb 1)$. In order to reach the blocks $({\pmb 3},3, \pmb 1, 2, \pmb 1, 1, \pmb 1)$, we then have to compute $T^{(2)}=T^{(3)}_{[7,8] \to 9}$:
$$
T^{(3)}= {\small{\tableau[scY]{ 1&2&4&8\\ 3&5&6 & 9 \\ 7 & 10 & 11 \\ 12}}}    \quad {\longrightarrow} \quad T^{(3)}_{8\to 9} =
{\small{\tableau[scY]{ 1&2&4&8\\ 3&5&6 & 9 \\ 7 & 10 & 11 \\ 12}}}  \quad {\longrightarrow} \quad   (T^{(3)}_{8\to 9})_{7 \to 8}=T^{(3)}_{[7,8] \to 9}=T^{(2)}=
{\small{\tableau[scY]{ 1&2&4&7\\ 3&5&6 & 9 \\ 8 & 10 & 11 \\ 12}}} 
$$
With $T_{\pmb a}=T^{(1)}=T^{(2)}$ and with 
the auxiliary diagram associated to $\tilde \Lambda'=(2,1,4,3,2)$ given by
$$
{\small{\tableau[scY]{
1^{\color{red}{0}}&2^{\color{red}{1}}  \\ 
3^{\color{red}{0}} \\
4^{\color{red}{0}}&5^{\color{red}{1}}&6^{\color{red}{2}}&7^{\color{red}{3}} \\ 8^{\color{red}{0}} & 9^{\color{red}{1}} & 10^{\color{red}{2}} \\  11^{\color{red}{0}} & 12^{\color{red}{1}} 
}}}
$$
we get that the contributions to ${\rm maj}_{\tilde \Lambda'}(T_{\pmb a})$  come from the letters  $5,10$ and $12$. Hence ${\rm maj}_{\tilde \Lambda'}(T_{\pmb a})= 1+2+1=4$ which implies that the contribution of $T$ to $K_{\Omega \Lambda} (q,1)$ is $q^4$.

\end{example}

\begin{proof}[Proof of Theorem~\ref{theo}]
From Proposition~\ref{propJ1}, we need to prove that, for a dominant $\Omega$, we have 
$$(q;q)_\Lambda
 h_{a_1}\left[x_1+\frac{qX}{1-q} \right]\cdots  h_{a_m}\left[x_m+\frac{qX}{1-q} \right]  h_\lambda\left[\frac{X}{1-q}\right] \Bigg |_{s_\Omega}=\sum_{T: \,{\rm sh}(T)=\Omega} q^{{\rm maj}_{\tilde \Lambda}(T_{\pmb a})}  
$$
or, equivalently, that
\begin{equation} \label{bigsum}
  (q;q)_\lambda  h_\lambda\left[\frac{X}{1-q}\right]
(q;q)_{a_1} h_{a_1}\left[x_1+\frac{qX}{1-q} \right]\cdots  (q;q)_{a_m}h_{a_m}\left[x_m+\frac{qX}{1-q} \right] \Bigg |_{s_\Omega}=\sum_{T: \,{\rm sh}(T)=\Omega} q^{{\rm maj}_{\tilde \Lambda}(T_{\pmb a})} . 
\end{equation}
We now use 
$$
h_{a_i}\left[x_i+\frac{qX}{1-q} \right] = \sum_{c_i=0}^{a_i} q^{a_i-c_i} h_{a_i-c_i}\left[\frac{X}{1-q} \right]x_i^{c_i}
$$
to rewrite the left-hand side of \eqref{bigsum} as
\begin{equation} \label{sumc}
\sum_{c_1=0}^{a_1} \dots \sum_{c_m=0}^{a_m} q^{|\pmb a|-|\pmb c|}
(q;q)_\alpha  h_\alpha\left[\frac{X}{1-q}\right]
\frac{(q;q)_{a_1}\cdots (q;q)_{a_m}}{(q;q)_{a_1-c_1} \cdots (q;q)_{a_m-c_m}} 
x^{\pmb c} \Bigg |_{s_\Omega},
\end{equation}
where $\alpha=(\lambda_1,\dots,\lambda_\ell,a_1-c_1,\dots,a_m-c_m)$. Now, it is known that \cite{M}
\begin{equation} \label{eqh}
(q;q)_\alpha  h_\alpha\left[\frac{X}{1-q}\right]= \sum_P q^{{\rm maj}_{\alpha}(P)}  s_{{\rm sh}(P)}(x) ,
\end{equation}
where the sum if over all standard tableaux $P$ of size $|\alpha|$.  Since $\Omega=(\pmb b;\mu)$ is dominant, we get from \eqref{eqkins} and Proposition~\ref{dualSD}  that 
\begin{equation} \label{eqS}
x^{\pmb c} s_{\nu}(x) \Big |_{s_\Omega}= k_\Gamma(x)\Big |_{s_\Omega}= D_{\Omega \Gamma} = \# \mathcal S_{\Omega \Gamma},
\end{equation}
where $\Gamma=(\pmb c;\nu)$. We recall that $Q \in \mathcal S_{\Omega \Gamma}$ if $Q$
is a filling of the skew shape $(\pmb b \cup  \mu)/\nu$ such that letter $i$ appears $c_i$ times and always within the first $b_i$ columns of $Q$.
For $i=1,\dots,m$, we then change the circle $i$ in $Q$ into a square filled with the letter $i$.
Note that this produces a skew tableau $Q_\circ$ by Lemma~\ref{lemmaskew}.
We now standardize $Q_\circ$ in the following way. Suppose that $r=|\nu|=|\alpha|$.
We replace all the letters $1$ in $Q$ (there are $c_1+1$ of them) by the letters $r+1,\dots,r+c_1+1$ ordered from left to right.  We replace all the letters $2$ in $Q$ (there are $c_2+1$ of them) by the letters $r+c_1+2,\dots,r+c_1+c_2+2$ ordered from left to right, and so on.  Let the resulting skew tableau be $Q_\circ^{\rm st}$.  Letting $\Gamma=(\pmb c; {\rm sh}(P))$, we thus have that
$$
(q;q)_\alpha  h_\alpha\left[\frac{X}{1-q}\right] x^{\pmb c} \bigg |_{s_\Omega}=  \sum_{P} \sum_{Q \in  \mathcal S_{\Omega \Gamma} } q^{{\rm maj}_{\tilde \alpha}(T_{P,Q})}, 
$$
where $T_{P,Q}$ is the standard tableau obtained from the union of the tableau $P$ and
 the skew tableau $Q_\circ^{\rm st}$ described earlier, and
where $\tilde \alpha=(\lambda_1,\dots,\lambda_\ell,a_1-c_1,\dots,a_m-c_m,1^{c_1+1},\dots,1^{c_m+1})$.

We now let
$$
T_{P,Q}'=(T_{P,Q})_{[r-a_m+c_m+1,r] \to n+m-c_m-1},
$$
where we recall that $r=|\alpha|=|\lambda|+|\pmb a|-|\pmb c|$, and where $n=|\Omega|$.  From Lemma~\ref{lemmaOrder}, we have that
$$
{\rm maj}_{\tilde \alpha}(T_{P,Q})={\rm maj}_{\tilde \alpha'}(T_{P,Q}'),
$$
where
$$
\tilde \alpha'=(\lambda_1,\dots,\lambda_\ell,a_1-c_1,\dots,a_{m-1}-c_{m-1},1^{c_1+1},\dots,1^{c_{m-1}+1},a_m-c_m,1^{c_m+1}).
$$

Now, fix a $T'=T_{P,Q}$.  This $T'$ can originate from many pairs $(P,Q)$, each pair corresponding to a certain $c_m$.
Since we only focus on the last $a_m+1$ letters of $T'$, we will refer to those letters as $1,\dots,a_m+1$ for simplicity. Let $s \geq 0$ be such that in $T'$ the letters 
$s+1,s+2,\dots,a_m+1$ are ordered from left to right and such that this sequence is the longest (that is, either $s=0$ or the letter $s+1$ is southwest of the letter $s$ in $T'$).  Note that, for a given $(P,Q)$, this 
can only occur if the corresponding $c_m$ is such that 
$c_m +1 \leq a_m-s+1$ given that in that case the last
$c_m+1$ letters
of $T'$ are ordered from left to right by construction (they correspond to the occurrences of the letter $m$ in $Q_\circ$ ordered form left to right). 
We will see that for such a $T'$, we have
\begin{equation} \label{eqlast}
 \sum_{c_m=0}^{a_m-s}  q^{a_m-c_m} \frac{ (q;q)_{a_m}}{ (q;q)_{a_m-c_m}}
q^{{\rm maj}_{\tilde \alpha'}(T')} =  q^{{\rm maj}_{\gamma^{(m)}}(T')},
\end{equation}
where
$$
\gamma^{(m)}=(\lambda_1,\dots,\lambda_\ell,a_1-c_1,\dots,a_{m-1}-c_{m-1},1^{c_1+1},\dots,1^{c_{m-1}+1},a_m+1).
$$
First suppose that $s\neq 0$.  
By definition of  ${\rm maj}_{\tilde \alpha'}$ and 
${\rm maj}_{\gamma^{(m)}}$, the letters smaller than $s+1$ will have the same contribution in both statistics (given that $s \leq a_m-c_m$) while the letters larger than $s+1$ will contribute 0 to both statistics given that by definition those letters are ordered from left to right. Hence, the only difference  between the two statistics is in the contribution of the letter $s+1$.
By definition of  ${\rm maj}_{\tilde \alpha'}$,
the letter $s+1$ will contribute $s$ to ${\rm maj}_{\tilde \alpha'}$ if
$s < a_m-c_m$, and 0 otherwise.  In the sum, we thus have that the 
  letter $s+1$ will  contribute $s$ to the ${\rm maj}_{\tilde \alpha'}$ statistic in   $T'$ if $s \neq a_m-c_m$ while it will contribute 0 otherwise.  We thus have to show that
\begin{equation} \label{eqsumT}
q^s \frac{(q;q)_{a_m}}{(q;q)_{s}} +  q^s \sum_{c_m=0}^{a_m-s-1} q^{a_m-c_m}  \frac{ (q;q)_{a_m}}{ (q;q)_{a_m-c_m}}
  =  q^s
\end{equation}
since on the right-hand side of \eqref{eqlast} the contribution to the ${\rm maj}_{\gamma^{(m)}}$ statistic of the letter $s+1$ in $T'$  is $q^s$.
The previous identity amounts to 
 \begin{equation} \label{eqsumT2}
 \frac{(q;q)_{a}}{(q;q)_{s}}= 1 -   \sum_{i=s+1}^{a} q^{i}  \frac{ (q;q)_{a}}{ (q;q)_{i}}.
 \end{equation}
 For a fixed $a$, the identity can be easily seen to hold by descending induction starting from the case $s=a$ (which is immediate). In the general case, supposing by induction that the case $s$ holds,  we have that
 $$
 \frac{(q;q)_{a}}{(q;q)_{s}}
 = 1 -   \sum_{i=s+1}^{a} q^{i}  \frac{ (q;q)_{a}}{ (q;q)_{i}}= 1 -   \sum_{i=s}^{a} q^{i}  \frac{ (q;q)_{a}}{ (q;q)_{i}} +    q^{s}  \frac{ (q;q)_{a}}{ (q;q)_{s}}.
 $$
 Hence, owing to $(1-q^s)(q;q)_{s-1}=(q;q)_s$, we have that
 $$
1 -   \sum_{i=s}^{a} q^{i}  \frac{ (q;q)_{a}}{ (q;q)_{i}} =     \frac{(q;q)_{a}}{(q;q)_{s}}- q^{s}  \frac{ (q;q)_{a}}{ (q;q)_{s}}=  \frac{(q;q)_{a}}{(q;q)_{s-1}},
 $$
 which proves \eqref{eqsumT2} by induction.

 Now suppose that $s=0$. In this case, the last $a_m+1$ letters in $T'$ are ordered from left to right and thus do not contribute to  ${\rm maj}_{\tilde \alpha'}$ and  ${\rm maj}_{\gamma^{(m)}}$. Hence, we have to show that
$$
  \sum_{c_m=0}^{a_m} q^{a_m-c_m}  \frac{ (q;q)_{a_m}}{ (q;q)_{a_m-c_m}}
  =  1 .
$$
But this holds given that it is easily seen to be the case $s=0$ of \eqref{eqsumT}.

Note that the $T'$ on the right-hand-side of \eqref{eqsumT} can be thought as stemming from
a pair $(P,Q)$ corresponding to the case $c_m=0$, in which case
$\tilde \alpha=(\lambda_1,\dots,\lambda_\ell,a_1-c_1,\dots,a_{m-1}-c_{m-1},a_m,1^{c_1+1},\dots,1^{c_{m-1}+1},1)$. Comparing with Definition~\ref{defTa}, it is thus natural to let $T'=T^{(m)}$ on the right-hand-side.
We have thus proven that
$$
\sum_{c_{m}=0}^{a_{m}}  q^{a_{m}-c_{m}} \frac{ (q;q)_{a_{m}}}{ (q;q)_{a_{m}-c_{m}}}
\sum_{P} \sum_{Q\in \mathcal  S_{\Omega \Gamma}} q^{{\rm maj}_{\tilde \alpha'}(T_{P,Q}')} 
= \sum_{P} \sum_{Q\in \mathcal  S_{\Omega \Gamma^{(m)}}} q^{{\rm maj}_{\gamma^{(m)}}(T^{(m)})},
$$
where $\Gamma^{(m)}=(c_1,\cdots,c_{m-1},0;{\rm sh}(P))$.

Following the same procedure, we can show that
$$
\sum_{c_{m-1}=0}^{a_{m-1}}  q^{a_{m-1}-c_{m-1}} \frac{ (q;q)_{a_{m-1}}}{ (q;q)_{a_{m-1}-c_{m-1}}}
 \sum_{P} \sum_{Q\in \mathcal  S_{\Omega \Gamma^{(m)}}}
 q^{{\rm maj}_{\gamma^{(m)}}(T^{(m)})} =
  \sum_{P} \sum_{Q\in \mathcal  S_{\Omega \Gamma^{(m-1)}}}
 q^{{\rm maj}_{\gamma^{(m-1)}}(T^{(m-1)})},
 $$
where $\Gamma^{(m-1)}=(c_1,\cdots,c_{m-12},0,0;{\rm sh}(P))$,
  and where
 $$
\gamma^{(m-1)}=(\lambda_1,\dots,\lambda_\ell,a_1-c_1,\dots,a_{m-2}-c_{m-2},1^{c_1+1},\dots,1^{c_{m-2}+1},a_{m-1}+1,a_m+1).
$$

After doing all the summations, we get
$$
\sum_{c_1=0}^{a_1} \dots \sum_{c_m=0}^{a_m} q^{|\pmb a|-|\pmb c|}
\frac{(q;q)_{a_1}\cdots (q;q)_{a_m}}{(q;q)_{a_1-c_1} \cdots (q;q)_{a_m-c_m}} 
 \sum_{P} \sum_{Q\in \mathcal  S_{\Omega \Gamma}} q^{{\rm maj}_{\tilde \alpha'}(T_{P,Q}')}
 q^{{\rm maj}_{\tilde \alpha}(T)}=\sum_{P} \sum_{Q\in \mathcal  S_{\Omega \Gamma^{(1)}}}
 q^{{\rm maj}_{\gamma^{(1)}}(T^{(1)})},
 $$
 where $\Gamma^{(1)}=(0^m;{\rm sh}(P))$, and where
 $$
\gamma^{(1)}=(\lambda_1,\dots,\lambda_\ell,a_1+1,\dots,a_m+1).
$$
Note that $\gamma^{(1)}= \tilde \Lambda$, and that $T^{(1)}=T_{\pmb a}$.
We thus finally have that
$$ 
\sum_{P} \sum_{Q\in \mathcal  S_{\Omega \Gamma^{(1)}}}
 q^{{\rm maj}_{\gamma^{(1)}}(T^{(1)})} = \sum_{T: \,{\rm sh}(T)=\Omega} q^{{\rm maj}_{\tilde \Lambda}(T_{\pmb a})} 
 $$
 since $Q$ is empty, which means that $(P,Q)=(P,\emptyset)$ now corresponds to an arbitrary standard filling $T$ of $\Omega$. This proves
 the theorem.
\end{proof}

\begin{remark}
  It was shown in \cite{L} that $K_{\Omega \Lambda}(q,t)=K_{\mu \lambda}(q,t)$ when
  $\Omega=(0^m;\mu)$ and $\Lambda=(0^m;\lambda)$. For such  $\Omega$ and $\Lambda$, it is easy to see that the combinatorial interpretation in Theorem~\ref{theo} corresponds to that of \eqref{usualcase}. Indeed, we have in this case that $\tilde \Lambda=(\lambda_1,\dots,\lambda_\ell,1^m)$ and $T_{\pmb a}=T_\circ$, where we recall that $T_\circ$ corresponds to $T$ with the $i$-circle transformed into a squared filled with the letter $|\lambda|+i$.  But since the last $m$ letters in $T_\circ$ do not contribute in this case to   $maj_{\tilde \Lambda}(T_\circ)$, we have that
  $maj_{\tilde \Lambda}(T_\circ)=maj_\lambda(P)$, where $P$ is the standard tableau corresponding to $T$ without its circles.
\end{remark}

\end{document}